\numberwithin{equation}{section}
\newtheorem{theorem}{Theorem}
\newtheorem{corollary}[theorem]{Corollary}
\newtheorem{thm}[theorem]{Theorem}
\theoremstyle{definition}
\newtheorem{defn}[theorem]{Definition}
\newtheorem*{exmp}{Example}
\newtheorem*{rem}{Remark}
\def\doublestroke#1{\pdfliteral{1 Tr .325 w}#1\pdfliteral{0 Tr 0 w}}
\def\Omm{\doublestroke{\mathbf{\Omega}}}
\def\Spp{\doublestroke{\mathbf{\Psi}}}
\def\Sp{\mathbf{\Psi}}
\def\Om{\mathbf{\Omega}}
\def\sig{\mathbf{\sigma}}
\def\rw{\mathbf{\rho}}
\def\bx{\mathbf{\xi}}
\title{Compactifications of phylogenetic systems and species of electrical networks}
\author{Satyan L.\ Devadoss}
\address[S.\ Devadoss]{University of San Diego, San Diego, CA 92110}
\email{devadoss@sandiego.edu}
\urladdr{satyandevadoss.org}
\author{Stefan Forcey}
\address[S. Forcey]{University of Akron, Akron, OH 44325}
\email{sforcey@uakron.edu}  
\urladdr{www.math.uakron.edu/\~{}sf34}
\begin{document}

\begin{abstract} 
 We describe new spaces and maps. Our graphical map is a visual and numerical correspondence between spaces of circular electrical networks and circular planar split systems. When restricted to the \emph{planar} circular electrical case, this graphical map finds the split system uniquely associated with the Kalmanson resistance distance of the dual network, matching the induced split system familiar from phylogenetics.  This correspondence is extended to compactifications of the respective spaces, taking cactus networks to the cactus split systems defined herein. 
The graphical map preserves both network components and cactus structure, allowing an elegant enumeration of induced phylogenetic split systems via combinatorial species. We introduce the global spaces of circular planar electrical networks and circular split systems. These new spaces are also CW complexes, but the 0-cells of each are counted by the Bell numbers as opposed to the Catalan numbers. As species, the two sorts of global cacti are seen to be compositions in complementary ways. 
\end{abstract}

\keywords{phylogenetic networks, electrical networks, metrics, splits, polytopes}
\subjclass[2000]{05C50, 05C10, 92D15, 94C15, 90C05, 52B11}
 
\baselineskip=17pt
\maketitle

%
%
\section{Introduction}

\subsection{Motivation}

 A favorite tool of phylogenetics researchers is the split system, which is a combinatorial structure associated to a metric on a collection of biological species or taxa. The split system reveals the large-scale structure of the phylogenetic tree or network. Recently it was demonstrated that the split system is also guaranteed to exist in the case of a circular planar electrical resistor network \cite{forc-pre}. Specifically, finding the split system is a consequence of a more basic result in that paper: the resistance metric for a circular planar electrical network is Kalmanson. This highlights a mathematical analogy between two problems: reconstructing a circuit in a `black box' from surface measurements, and rebuilding a genetic history from existing population data. \begin{figure}[h!]
    \centering
\includegraphics[width=.85\linewidth]{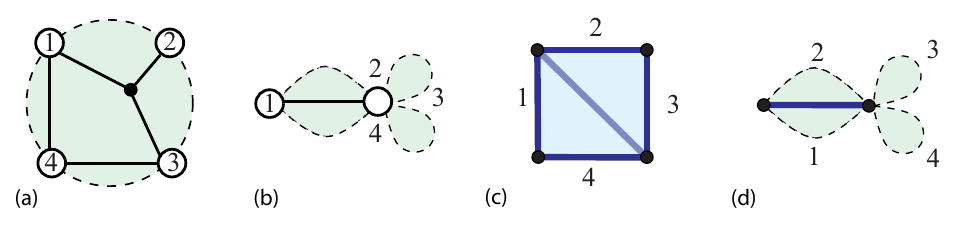}
    \caption{Example labels for the cells in the spaces we consider, for $n=4$: (a) circular planar electrical networks, (b) cactus networks  (c) circular split systems  and (d) circular cactus split systems .  The total numbers of these cells are listed in Table~\ref{topper}, row 4, and the entire sets for (a) and (c) are shown in Figure~\ref{fig:fours}.}
    \label{newone}
\end{figure}

\noindent In this paper we carry the analogy further. A printed electrical circuit is made of conductors in a network, with terminal nodes on the exterior available for connections. Some of those exterior nodes might be duplicates, short-circuited together so that they are interchangeable in terms of their use. Similarly, monozygotic twins can be seen as genetically the same for the purpose of reconstructing a family tree from the DNA samples of living individuals. On a larger scale, basic DNA sequences of extant individuals from a diverse ecosystem can be equated when they come from the same biological species. Allowing nodes, either terminals or taxa, to be identical in the limit of  being infinitely close to each other corresponds to taking an unbounded space and compactifying it. Here we define and relate the compactification of phylogenetic split systems (from the DNA examples) to the well-studied compactification of circular planar electrical network space.

We have the following applications in mind: 
\begin{enumerate} 
\item[(1)] How can a network be constructed (more quickly) to meet specified measurements of electrical properties on its boundary? 
\item[(2)] How can we determine (more quickly) from those measurements whether a network is planar?
\item[(3)] How many ways can a given circular planar electrical network be reordered at its boundary while retaining its characteristic measurements and planarity? 
\item[(4)] Can those potential reorganizations be obtained from the network's electrical properties, measured at the boundary? 
\end{enumerate}
 We insert the parenthetical ``\emph{more quickly}'' since well-known methods based on positivity answer the first two questions, as in \cite{curtisbook}. The application of such methods and our improvements is clear: a circular planar electrical network with desired responses at its terminal nodes is a circuit that can be printed on a board with terminal pins. The ability to reorder those terminal nodes while retaining the character and planarity is important to integrating the circuit into a larger design. Placing component circuits into a large array, with terminals available to connect to each other, requires first choosing the convenient ordering of the terminals on each circuit. Combining these queries, we can also make specific requests: if a circuit is planar, but we wish to add or cross wires, can the resulting circuit be made planar again?

\begin{table}[h]
    \centering
    \begin{tabular}{|c|c|c|c|c|c|c|}
    \hline
    \rule{0pt}{2.6ex}\rule[-1.2ex]{0pt}{0pt}
     $n$   & $|EP_n|$ & $|{\Omm}_n| = |P_n|$ & $|\Sp_n|$ & $|\Spp_n|$ & $|\bx(EP_n)|$  & $|\sig(\Omm_n)| = |\bx({\Omm}_n)| $ \\
     nodes &  cells of $\Om_n$ & cells of $E_n$& &&cells of $\bx({\Om}_n)$ & cells of $\bx({E}_n)$ \\
      \hline\hline
 \rule{0pt}{2.6ex}\rule[-1.2ex]{0pt}{0pt} 
      1 & 1 & 1 & 1 & 1 & 1 & 1\\
      \hline
 \rule{0pt}{2.6ex}\rule[-1.2ex]{0pt}{0pt}     
       2 & 2 & 3 & 2 & 3 & 2 & 3\\
      \hline
 \rule{0pt}{2.6ex}\rule[-1.2ex]{0pt}{0pt} 
        3 & 8 & 15 & 8 & 15 & 8 & 15\\
        \hline
 \rule{0pt}{2.6ex}\rule[-1.2ex]{0pt}{0pt}      
      4 &  52 & 105 & 64 & 117 & 49 & 102\\
      \hline
 \rule{0pt}{2.6ex}\rule[-1.2ex]{0pt}{0pt}  
      5 & 464 & 945 & 1024 & 1565 & 373 & 839\\
     \hline
 \rule{0pt}{2.6ex}\rule[-1.2ex]{0pt}{0pt}  
      OEIS & [A111088] & [A001147] & [A006125] & [A136654] & [?] & [?]\\
      \hline

    \end{tabular}
    \vspace{.15in}
     \caption{Spaces: numbers of cells in each CW complex. Left to right these are circular planar electrical networks, cactus planar electrical networks, circular split systems, and circular cactus split systems. The last two columns count the cells in the range of our maps from electrical networks to split systems. Examples are in Figure~\ref{newone}.}
    \label{topper}
\end{table}

\subsection{Background} Spaces of circular electrical networks, both planar and not planar, have been studied thoroughly over the last couple of decades. 
Collected early results are in the book by Curtis and Morrow \cite{curtisbook}. More recently, the state of the art for circular planar electrical networks is represented by Kenyon and Wilson \cite{surf, dimers, kenyon}, Kenyon and Hersh \cite{hersh1}, and Gao, Lam and Xu \cite{gao}. The compactified space of circular planar electrical networks is an embedded slice of the totally nonnegative Grassmannian, and thus projected into the \emph{amplituhedron}, as shown in several papers by Galashin, Karp, and Lam \cite{galashin, lam1, lam-ball, lam2, lam-grass}. Connections to plabic graphs (which represent classes of directed networks) have been made by Galashin, Postnikov, and Williams \cite{galashin2019higher}. Recognizing the planarity of a network and recovering its conductances both saw recent improvements in efficiency \cite{alman, kenyon}. 

 Spaces of circular split systems are better known via phylogenetics, as in the book by Steele \cite{steelphyl}, and in recent papers by Catanzaro, Gambette, Balvo\v{c}i\={u}t\.{e}, Bryant, and Spillner \cite{CATANZARO2021, Gambette2017, spillner}. Devadoss and Petti \cite{dev-petti} showed that the space of circular split systems is an intuitive extension of the Billera-Holmes-Vogtmann space of phylogenetic trees \cite{bhv}, exhibiting a natural projection into the compactification of the real moduli space of curves \cite{dev1, dev3}. It is studied as a simplicial complex by Terhorst \cite{terhorst}, and is related to the BME polytope and STSP polytope by Devadoss, Forcey, Scalzi, and Durell \cite{fpsac19, durell, scalzo}. 

In our recent papers \cite{forc-pre,  frontiers}, we demonstrated an injective mapping from electrical equivalence classes of circular planar electrical networks to circular split systems. Every planar response matrix is a Kalmanson metric, and thus associated to a unique weighted circular split system. To see if a given circular split system is the image of an electrical network is more complicated. For a given Kalmanson metric one can first reorder the matrix to match any cyclic order respected by its circular split system. Then there is a unique associated response matrix, which can be checked for nonnegative circular minors to decide if it is a planar electrical network. If so, then we show in \cite{forc-pre} that the circular planar electrical network has the same overall tree structure as the split system network of the resistance matrix. Then the algorithm from Chapter 9 of \cite{curtisbook} can reconstruct a critical subnetwork for any part not made of bridges, using the strand diagram. In \cite{moscow} the authors show how to check for planarity directly using the (reordered) resistance metric. This is done via the Lam embedding into the nonnegative Grassmannian, but with a variation that takes the resistance metric as input. Then in \cite{moscow} the authors show an alternative reconstruction of the electrical network again using the resistance metric directly to find the strand matching. 

\begin{figure}
 \includegraphics[width=\textwidth]{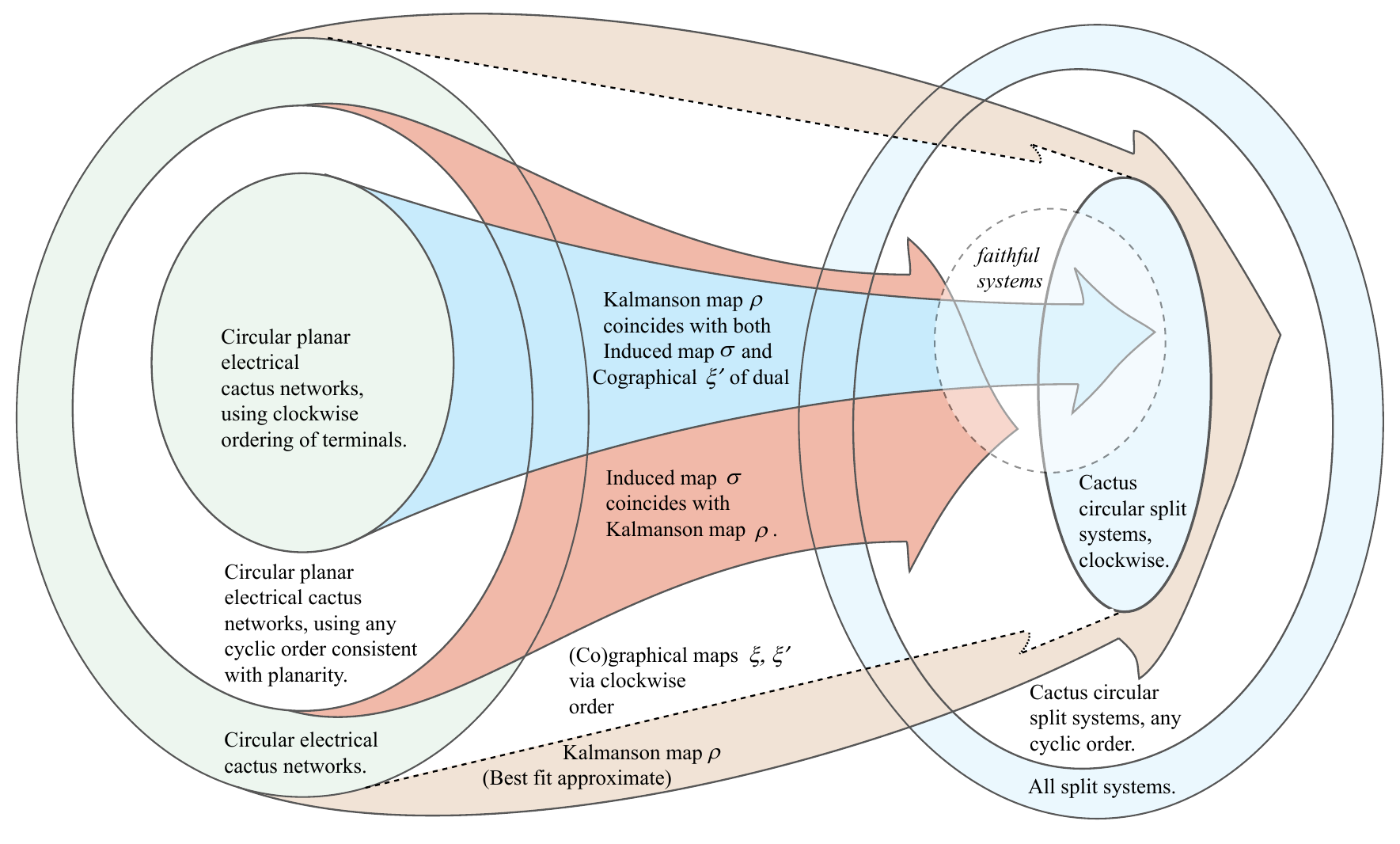}
 \caption{Domains and ranges for the maps in this paper.}
 \label{coolmap}
\end{figure}
\subsection{Overview} From a geometric combinatorial perspective, our interest lies in comparing the spaces of \textit{circular planar electrical networks} and \textit{circular split systems} as CW-complexes with analogous cell-decompositions. We will review the definitions in the following sections, but for now we show some examples of each, to introduce our notation. $\Om_n$ is the space of circular planar electrical networks with $n$ exterior nodes, in notation from \cite{curtisbook}. $\Om_n$ has its poset of cells denoted $EP_n$ in \cite{alman}. The cells correspond to equivalence classes of critical networks such as (a) in Figure~\ref{newone}. The compactification of $\Om_n$ is the space of cactus networks, denoted $E_n$ in \cite{lam}. We denote by $\Omm_n$ the poset of cells of $E_n,$  with an example in (b) of Figure~\ref{newone}.

The space of split systems on the set $[n]$ is denoted $\Sp_n.$ Subfigure~\ref{newone}~(c) shows an example of a split system in $\Sp_4.$    The compactification of $\Sp_n$ is defined in this paper and is denoted $\Spp_n$, with an example in (d). In the interest of readability, we often use one name for both the CW-complex and the poset of cells: we will refer to $|\Om_n|$ as the number of cells in $\Om_n$ and to $|\Sp_n|$ as the number of cells in $\Sp_n.$  Similarly, we will use $\Omm_n$ interchangeably with $E_n.$  

Further, we want to consider the sub-complexes of split systems that are the range of the graphical map $\bx$, which takes circular planar electrical cactus networks to circular cactus split systems. In Figure~\ref{newone} (c) is the image of (a) under $\bx$, and (d) is the image of (b). There are fewer cells in that range than split systems in general: the implied mapping on cells is not onto. Neither is it an injection of cells. However the graphical map does preserve the connected components and cactus structures, and this provides us a way to count cells in the range either in either case, whether we want just the ordinary split systems or their compactifications as well. Table~\ref{topper} shows the results of counting the cells of all dimension in the various complexes. 

The map $\bx$ is quite simple in its operation on matrices: it is the identity map (on the independent portion of the matrix.)  The identity map is clearly a homeomorphism. Thus the facts about the space $E_n$ of cactus networks as shown in \cite{lam} and \cite{lam-ball} are also true of the image of $\bx$ in the compactified space of  circular split systems. It is topologically a ball, and isomorphic to a certain subspace of the nonnegative Grassmannian. The map $\bx$ often takes several cells of $E_n$ (elements of $\Omm_n)$ and embeds those cells inside of a single cell of the space of circular cactus split systems, $\Spp_n$.  
However, the map does respect the cell structure in that, for cells, $x\le y$ in the domain iff $\bx(x) \le \bx(y)$ in the range.
Thus 
it induces a new regular CW-complex structure on the circular electrical networks.  Those new cells in $E_n$ are labeled by their Kron reductions, or by their image in $\Spp_n$ which we will designate by plabic tilings. Careful study of these cells in $E_n$ is important future work.

 In this paper we first introduce cactus split systems and extend the maps to the domain of electrical cactus networks. Then we expand to \textit{global} spaces: while traditionally the objects have a given cyclic order of $[n]$ we instead allow any cyclic order, and identify appropriate equivalent networks. That is how classical phylogentic spaces are studied, both the space of trees $BHV_n$  and the space of split networks (with all trivial splits) $CSN_n.$  The combinatorics and topology become much more complex in the global case. With the global equivalence in place even the basic enumeration of the objects becomes difficult. That is not surprising when we see how they relate to famous hard problems, specifically facets of the Symmetric Travelling Salesman polytope. Table~\ref{top2} lists the total numbers of cells in our new global spaces, both compact and non-compact. The last column there lists the number of cells in the image of the global cactus networks, under our map $\sig$ that takes a cactus network to displayed splits on the parts of the partition into its connected components.   

\begin{table}[h]
    \centering
    \begin{tabular}{|c|c|c|c|c|c|c|}
    \hline
    \rule{0pt}{2.6ex}\rule[-1.2ex]{0pt}{0pt}
     $n$   & $|\Om^{global}_n|$ & $|\Omm^{global}_n|$ & $|\Sp^{global}_n|$ & $|\Spp^{global}_n|$  & $|\sig(\Omm^{global}_n)|$ \\
      \hline\hline
 \rule{0pt}{2.6ex}\rule[-1.2ex]{0pt}{0pt} 
      1 & 1 & 1 & 1  & 1 & 1\\
      \hline
 \rule{0pt}{2.6ex}\rule[-1.2ex]{0pt}{0pt}     
       2 & 2 & 3 & 2 & 3  & 3\\
      \hline
 \rule{0pt}{2.6ex}\rule[-1.2ex]{0pt}{0pt} 
        3 & 8 & 15 & 8 & 15  & 15\\
        \hline
 \rule{0pt}{2.6ex}\rule[-1.2ex]{0pt}{0pt}      
      4 &  70 & 133 & 112 & 169  & 124\\
      \hline
 \rule{0pt}{2.6ex}\rule[-1.2ex]{0pt}{0pt}  
      5 & 1466 & 2397 & 6976 & 7857  & ?\\
     \hline
 \rule{0pt}{2.6ex}\rule[-1.2ex]{0pt}{0pt}  
      OEIS & [?] & [?] & [?] & [?]  & [?]\\
      \hline
 
     \hline

    \end{tabular}
    \vspace{.15in}
    \caption{Global spaces: numbers of cells.}
    \label{top2}
\end{table}

\subsection{Results}

We describe convenient maps between circular electrical networks and circular split systems. The latter are simpler combinatorial structures well-studied in the field of phylogenetics, so our maps can be seen as invariants of the electrical networks.  Our \textit{graphical map} $\bx$ (defined in Section~\ref{s:maps}) takes \emph{any} compactified circular network (cactus network, planar or not) to a cactus \emph{planar} split system.  The latter are defined in Section~\ref{s:cactus}. We show how to count them, with Corollary~\ref{bigform}:
 A formula for the the number $|\Spp_n|$ of unweighted cactus split systems for a given cyclic order on $[n]$ is as follows:\\
 $$|\Spp_n| =\frac{1}{n+1}\left(\sum_{j_0+\dots+j_n = n}\left(\prod_{i=0}^n |\Sp_{j_i}| \right)\right),$$\\
 where the sum is over ordered lists of non-negative integers summing to $n$, and $|\Sp_n|$ is the number of unweighted (non-compact) circular split systems: $|\Sp_n| = 2^{n \choose 2}.$

Some resulting numbers are shown in Table~\ref{topper}. The formula in Corollary~\ref{bigform} arises via Lagrange inversion from a recursive functional equation of combinatorial species composition. We use the same notation to refer to species and generating functions as we use for spaces and CW complexes. Thus for circular cactus split systems the species equation is $\Spp = \Sp \circ (X\cdot \Spp)$, as shown in Section~\ref{s:count}. The general principle we notice is that often the compactification of a space of networks is related to the original space of ordinary networks by that same functional equation.  This corollary also applies to the counting of equivalence classes of electrical cactus networks.  Since as species we have $\Omm = \Om \circ(X \cdot \Omm)$ the number $ |\Omm_n|$ of cells for cactus networks can be found by: 
$$|\Omm_n|=\frac{1}{n+1}\left(\sum_{j_0+\dots+j_n = n}\left(\prod_{i=0}^n |\Om_{j_i}| \right)\right) = (2n-1)!!,$$\\

\noindent where $|\Om_n|$ is the number of distinct classes of  non-compact electrical networks. For instance, $(5(52)+10(8)2+10(2)3+10(4) + 5(1))/5 = (2(4)-1)!!.$ The double factorial formula was shown in \cite{lam}, but the larger formula is new.\footnote{Our formula for $(2n-1)!!$ here is related to an inverse formula given by Paul D. Hanna in entry [A111088] of \cite{oeis}.} 
 
  A third instance of this principle is forthcoming. First though, if the network $N$ itself is planar, then $\bx(N)$ is a shortcut to the alternative, well-known ways to find the associated split system:

\begin{thm}\label{top}
For a planar cactus network $N$, the graphical system $\bx(N)$ coincides with both the Kalmanson and the induced split systems of the planar dual $N^*$. That is, $\bx(N) = \rw(N^*) = \sig(N^*) .$ Respectively, we have $\rw(N) = \sig(N) = \bx'(N^*).$ 
\end{thm} 

We show the domains and codomains of the maps in Figure~\ref{coolmap}. Theorem~\ref{top} is a summary of Theorems~\ref{bigth} and \ref{match} in Section~\ref{s:cactus}, as extended to the compactified case via Theorem~\ref{matchcaxt}. 
Indeed, we show that the split systems in the image of this multi-named map have a particular form, known as \textit{faithful} \cite{frontiers} --- those split systems which are made up of splits displayed by some circular planar network, phylogenetic or electrical. These faithful systems are an important subcomplex of the space of circular split systems.  Since taking duals of a circular planar split network is a bijective operation, Theorem~\ref{top} has an immediate application for counting these faithful split systems. We note that if an electrical cactus network $N$ has multiple connected components, then the faithful (induced) split system $\sig(N)$ will have cactus bulbs corresponding to connected components of $N$ and vice versa. This complicates directly counting the induced split systems. On the other hand, our graphical split system $\bx(N)$ has the same number of components and the same cactus form as the network $N$. Thus we can use our recursive counting principle to relate numbers of split systems for these faithful forms in the compact and non-compact cases, in Corollary~\ref{last} 
by the now recognizable formula: 
$$|\sig(\Omm_n)|= |\bx(\Omm_n)| = \frac{1}{n+1}\left(\sum_{j_0+\dots+j_n = n}\left(\prod_{i=0}^n |\bx(\Om_{j_i})| \right)\right).$$\\
Section~\ref{s:count} proves enumeration results (summarized by Table~\ref{topper}, page~\pageref{topper}) for our spectrum of spaces (showcased by Figure~\ref{coolmap}, page~\pageref{coolmap}). In order to recognize the form of faithful split systems we define a new class of plabic tilings for sets of polygons. Then we finish with the following (expanding Corollary~\ref{last}) :

\begin{corollary}\label{cdos}
The number of cells in the compactification of the  complex of faithful split systems with $n$ boundary nodes is
$$|\sig(\Omm_n)| \ = \ \frac{1}{n+1}\left(\ \sum_{j_0+\dots+j_n = n}\left(\ \prod_{i=0}^n \ \left(\ \sum_{s\in {\mathbf{\mathcal{T}}}_{j_i}} \ 2^{t(s)}\right) \right)\right)\,,$$\\
where $j_0,\dots,j_n$ are ordered non-negative integers, ${\mathbf{\mathcal{T}}}_{j_i}$ is the set of plabic tilings of a polygon with $j_i$ vertices, and $t(s)$ is the number of boundary edges of the shaded regions of $s$.
\end{corollary}
  
In Section~\ref{s:glob}  we introduce the globalization of the classical spaces and begin their study. We point out some initial results, such as the fact that the global spaces allow all cyclic orders, so their compactifications allow all partitions rather than just non-crossing partitions of $[n].$  In Theorem~\ref{globspecs} we see that the global cactus split systems can be described as a composition of species:  $\Spp^{global} = E_+\circ \Sp^{global}$ where $E_+$ is the species of non-empty sets. That is, a global cactus split system is formed by partitioning the set $[n]$ and then making a global ordinary split system on each  part. As exponential generating functions we have: 
\begin{large}
  $$  {\Spp^{global}(x) = e^{\Sp^{global}(x)}-1.} $$
\end{large}  
In contrast, as a species the  global cactus electrical networks are described by the opposite composition: $\Omm^{global} = \Om^{global} \circ E_+$.
As  exponential generating functions we have: 
\begin{large}
$$\Omm^{global}(x) = \Om^{global}(e^x-1).$$
\end{large}

  Section~\ref{s:corro} collects some corollaries that relate our theorems to the practical questions (1)-(4), via obstructions to circular planarity.
Using the plabic tilings we count the number of consistent cyclic orders for a planar network and connect that collection to a face of the Symmetric Traveling Salesman polytope.

%
%
\section{Classical Spaces} \label{s:spaces}
\subsection{Circular Electrical Networks}

Physically, a general electrical network $N$ is made of conducting Ohmic wires with $n$ exposed terminals that can be tested in order: We apply unit voltage to each of the terminals in turn while grounding all the remaining $n-1$ terminals.
This defines the \emph{response matrix} $M(N)$: entry $M_{ij}$ is the current at terminal $j$ of $N$ when the unit voltage is applied to terminal $i$.

A \emph{circular electrical network} is a graph
 with its selected set of $n$ \emph{terminals} (or \emph{boundary nodes}) labeled by $[n]$ and arranged on a circle. The rest of the graph lies inside the circle, and the \emph{interior nodes} are unlabeled. The edges are weighted with positive real numbers which typically stand for the conductance of each connection. A \emph{circular planar electrical network} $N$ is one that can be represented with its terminals on a bounding circle and with no crossed edges in the disk. Usually the terminals are assumed to be numbered in clockwise order. We will consider planarity with respect to any cyclic order in Section~\ref{s:glob}.

 Two circular electrical networks are \emph{electrically equivalent} if they have the same response matrix.  An equivalence class is planar if any representative is planar. 

 \begin{figure}[t]
\includegraphics[width=.9\textwidth]{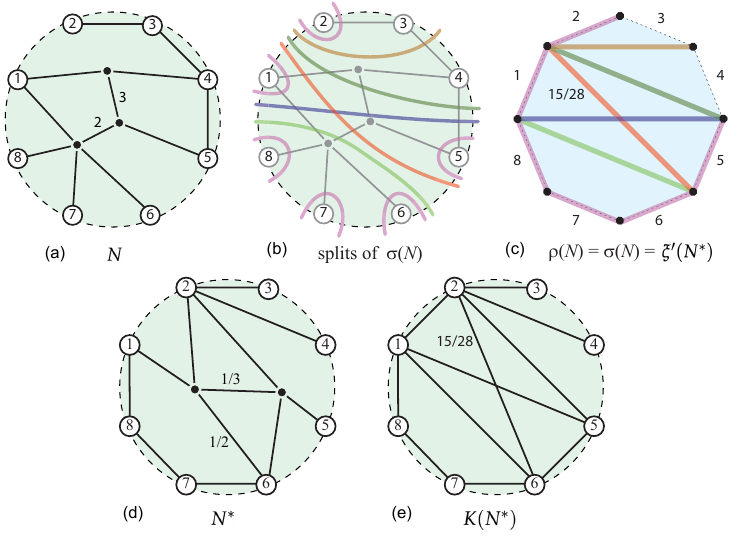}
\caption{A planar network and the related structures that we will calculate. Top row shows (a)~a clockwise planar electrical network $N$ (with conductance 1 for non-labeled edges), (b)~its splits $\sig(N)$, (c)~the polygon diagram of $\bx'(N^*) = \rho(N) = \sig(N)$, (d)~the dual $N^*$,   and (e) the Kron reduction $K(N^*)$.}
\label{f:intro}
\end{figure}

\begin{defn}
For a circular electrical network $N$, its \emph{Kron reduction} $K(N)$ is an equivalent network with the same terminal nodes as $N$, but no internal nodes. Two terminal nodes are directly connected by an edge in $K(N)$ if there is a path in $N$ connecting them which does not go through other terminal nodes. An edge $\{i,j\}$ of $K(N)$ is given the weight $M_{ij}(N)$.
\end{defn}
The Kron reduction $K(N)$ is an invariant of the electrical equivalence class of $N$. The weighted split system associated to a network is an electrical invariant as well, as shown in \cite{forc-pre}.

For circular planar networks, planar representations of two networks in the same class are related by a sequence of moves selected from: 1) replacing a series of edges, 2) replacing parallel edges, 3) deleting superfluous edges, and 4) the $Y-\Delta$ move. Those moves are pictured in \cite{curtisbook}. Also discussed in that source are several  other weaker invariants of electrical equivalence, including the set of \textit{connections} across the network, and the associated set of \textit{positive circular minors} calculated from the response matrix. All the circular minors must be nonnegative for planar networks, but which of them are positive or zero depends on the specific network. Another invariant is the sub-collection of networks of a given equivalence class with the minimal number of edges, called the \textit{critical} networks.  We do not directly use the definitions of connections and circular minors in this paper, but they are implicitly considered when we enumerate the cells of $\Om_n$. Each cell is made of response matrices that share values of these (unweighted) electrical invariants: the collections of connections, positive minors, and critical networks.

 The unweighted set of splits is also an invariant of the electrical equivalence class, as shown in \cite{forc-pre}, but like the unweighted Kron reduction, it is strictly weaker than the set of connections or the set of unweighted critical networks.
 
 Figure~\ref{f:intro}(a) shows an example of a circular planar network $N$, where the non-labeled edges have conductance 1. This is a running example, and we show how to calculate the rest of Figure~\ref{f:intro} in the following sections. The dual $N^*$ shown in part (d) is calculated in Figure~\ref{f:strands}. The conductances of the Kron reduction of the dual are given by the off-diagonal entries of the response matrix $M(N^*)$, where this matrix is computed as the Schur complement of the graph Laplacian with respect to the interior nodes (we show examples in \cite{forc-pre}). Here we only show the weight of one edge $M(N^*)_{2,6} = 15/28.$

As an alternative measurement we could use an ohmmeter to test the resistance (impedance) between pairs of our terminals. We record these results as the \emph{resistance matrix} $W(N)$, where $W_{ij}$ is the effective resistance between $i$ and $j.$ The entries $W_{ij}$ are a metric on the terminal nodes, as shown in \cite{klein93}.

\noindent Figure~\ref{network_examplo}(a) shows a non-planar case. \begin{figure}[h!]
\includegraphics[width=\textwidth]{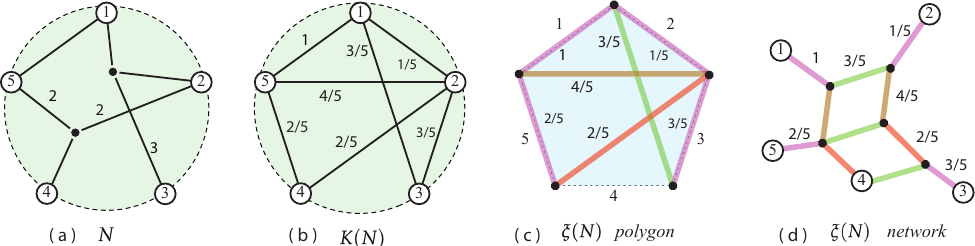}
\caption{(a)  Non-planar (with respect to the clockwise order) circular electrical network $N$.}
\label{network_examplo}
\end{figure} 
Then we show  $M(N)$ and $W(N)$ for Figure~\ref{network_examplo}(a):
$$ M = 
 \begin{bmatrix}
 -9/5&1/5&3/5&0&1\\
 1/5&-2&3/5&2/5&4/5\\
 3/5&3/5&-6/5&0&0\\
 0&2/5&0&-4/5&2/5\\
 1&4/5&0&2/5&-11/5\\
 \end{bmatrix} 
\hspace{.4in}
 W = 
 \begin{bmatrix}
 0&1&13/12&31/16&3/4\\
 1&0&13/12&23/16&3/4\\
 13/12&13/12&0&109/48&4/3\\
 31/16&23/16&109/48&0&23/16\\
 3/4&3/4&4/3&23/16&0\\
 \end{bmatrix}
$$

Figure~\ref{network_examplo}(b) displays the Kron reduction for the network in part (a). Note that $N$ and $K(N)$ have the same response matrix, and are therefore equivalent as circular electrical networks. In fact, the response matrix $M(N)$ is the Laplacian of the Kron reduction $K(N)$, and thus $K(N)$ can be viewed as a visualization associated with $M(N)$. The rest of Figure~\ref{network_examplo} will be explained in the section on maps.

Curtis and Morrow \cite{curtisbook} defined a space of response matrices $\Om_n$ as follows:

\begin{defn}
Let $\Om_n$ be the space of all response matrices for circular planar networks with $n$ distinct boundary nodes, labeled by $[n]$ in clockwise order.
\end{defn}

The space $\Om_n$ is of is stratified into cells as a CW-complex. The poset of cells is denoted $EP_n$ in \cite{alman}.  We often use the same notation for the space and the face poset of cells, determined by context. For instance the enumeration of these cells $a_n = |\Om_n| = |EP_n|$  is given recursively in \cite{alman} (and gives the OEIS sequence [A111088]): 
\begin{equation}
\label{e:main}
a_n \ = \ 2(n-1)\, a_{n-1} \ + \ \sum_{j=2}^{n-2} (j-1)\, a_j \, a_{n-j}\,, \ \ \textup{where}  \ \ a_0 =a_1 = 1, a_2 = 2 \,.
\end{equation}
 
The space $\Om_n$ has dimension ${n \choose 2}.$  Each element in a cell of dimension $k$ in $\Om_n$ can be minimally represented by choosing positive conductance weights for a (non-unique) planar  \emph{critical} (or \emph{reduced}) network with $k$ edges.  A cell $x$ is contained in another cell $y$ when the network corresponding to $x$ can be found by either deleting or contracting edges of $y$.  The left side of Figure~\ref{fig:fours} shows critical networks for each of the 52 cells of $\Om_4$. Each column corresponds to cells of $\Om_4$ of a fixed dimension, 0--6, with $f$-vector $(1, 6, 14, 16, 10, 4, 1)$.  
The $f$-vector of $\Om_5$ is $(1, 10, 40, 85, 110, 97, 65, 35, 15, 5, 1).$

\begin{figure}[h!]
\includegraphics[height=.9\textheight]{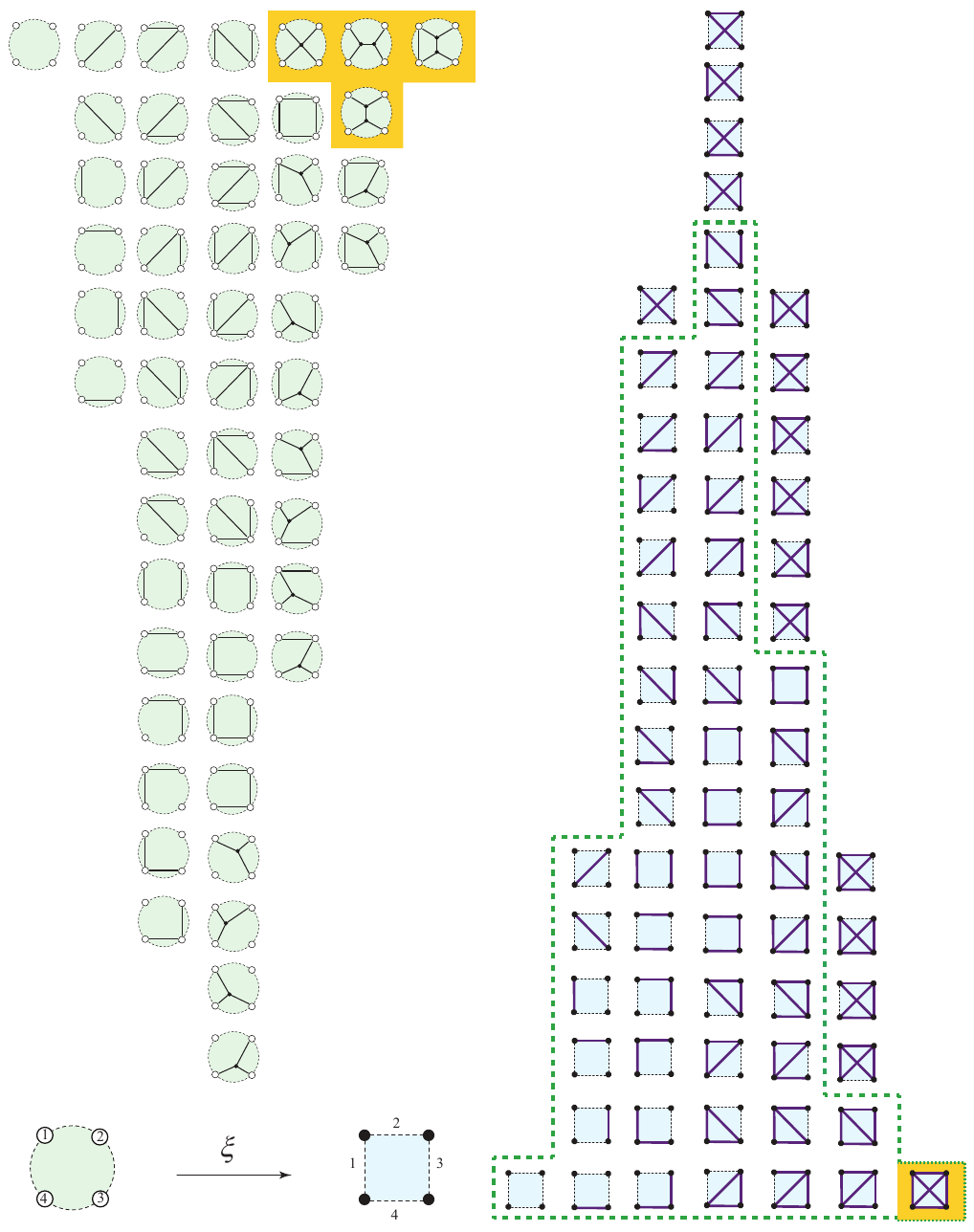}
\caption{Cells of the space $\Om_4$ on the left,  $\Sp_4$ on right. The image $\bx(\Om_4) \subset \Sp_4$ is inside the dashed line. The highlighted system on lower right is the image of the 4 networks highlighted at the top on the left.} 
\label{fig:fours}
\end{figure}

\subsection{Phylogenetic Split Systems}

A \emph{split} of $[n]$ is a bipartition $A|B$ of $[n]$ and a \emph{split system} is any collection of splits of $[n]$. 
A graph with some of its nodes labeled by $[n]$ \emph{displays} a split $A|B$ if there is a set of edges whose removal increases the number of connected components by one, and the two new components include respectively the nodes labeled by $A$ and $B$. A \emph{minimal display} of the split is a displaying set of edges that do not contain a proper subset displaying that split. 
A \emph{split network} is a representation of a split system as a graph, where each split is minimally displayed by a set of parallel edges of the same length. A \emph{circular split system} can be drawn as a split network with the $n$ \textit{boundary nodes} on the boundary of a disk, and with non-crossing sets of parallel edges.   A circular split system can also be represented by a \emph{polygonal diagram} \cite{dev-petti}: the $n$ terminals label the edges of an $n$-gon in some cyclic order, and each split is drawn as a diagonal. Figure~\ref{top} has an unweighted polygonal diagram in (c) which shows the split system on $[4]$ with all four \textit{trivial splits} (these have one singleton set)  and the split $\{1,4\}|\{2,3\}.$ Figure~\ref{network_examplo} displays a weighted polygonal diagram in (c) of the circular split network drawing in (d).
Following \cite{dev-petti} we weight the splits by assigning each split a non-negative real value independently. The weight of zero is equivalent to removing that split entirely. 

\begin{defn} 
Let $\Sp_n$ be the space of weighted circular split systems on $n$ boundary nodes, using the clockwise cyclic ordering of $[n]$. Since their are ${n \choose 2}$ splits possible, the resulting space is isomorphic to the nonnegative orthant of $\mathbb{R}^{n(n-1)/2}.$
\end{defn}

\noindent
The space $\Sp_n$ is stratified with cells corresponding to split systems with no edge weighting. The total number of cells is $|\Sp_n| = 2^{n \choose 2}$. A cell $x$ is contained in another cell $y$ when the system corresponding to $x$ can be found by removing splits of $y$.  In fact, as a CW-complex $\Sp_n$ is equivalent to a nonnegative orthant of $\mathbb{R}^{n \choose 2},$ with cells found as the regions of the coordinate hyperplanes, axes, etc. The right side of Figure~\ref{fig:fours} shows the cells of $\Sp_4$, each column corresponds to cells of a fixed dimension, with $f$-vector $( 1, 6, 15, 20, 15, 6, 1)$, the 6th row of Pascal's triangle.

Next we review, that to any circular split system $n$, we can associate a \emph{dissimilarity matrix}, an $n \times n$ real, symmetric, nonnegative matrix, where the $(ij)$-entry is the sum of the weighted splits between nodes $i$ and $j$ in the network. In our case, the dissimilarity matrix is the resistance matrix $W$. It is well-known \cite{steelphyl} that there is a bijection between weighted circular split systems and dissimilarity matrices satisfying the \emph{Kalmanson condition}: there exists a cyclic order of the boundary such that for any four nodes $i,j,k,l$ listed in that cyclic order,
\begin{equation}\label{kal}
\max(W_{ij} \ + \ W_{kl}, \ W_{jk} \ + \ W_{il}) \ \le \ W_{ik} \ + \ W_{jl}\, .
\end{equation}

Thus,  for a fixed circular labeling, the space of Kalmanson dissimilarity matrices of circular split systems on this labeling can be identified with $\Sp_n$. Kalmanson proved that the metrics obeying this condition yield solutions to the Traveling Salesman Problem in polynomial time \cite{ken} . 

%
%
\section{Maps between Classical Spaces} \label{s:maps}

In this section we explain functions that take input the connected circular (planar) electrical networks. The functions will be extended to  multiple connected components and cactus versions of the electrical networks in Section~\ref{s:cactus}, after we discuss duals on cactus networks and define cactus split systems.

\subsection{Graphical Map $\bx$}

We start by defining a key function which takes any equivalence class of circular electrical networks (planar or non-planar) as input and returns a circular \emph{planar} split system. The input circular networks (and their response matrices) must be planar with respect to a given cyclic ordering, typically the clockwise order. While all our maps are defined on weighted networks (edges weighted with conductance), and the outputs are weighted split systems, we often do abuse notation and apply the maps to unweighted input and output to see their purely combinatorial function.

\begin{defn}
The \emph{graphical split system} $\bx(N)$ of an  electrical network $N$ is the weighted circular planar split system constructed by putting the $n$ boundary nodes of $N$ in circular clockwise order and reinterpreting the Kron reduction network $K(N)$ as a polygon --- shifting its terminals by a half-step \emph{counterclockwise} rotation and labeling the exterior sides instead. The \emph{cographical split system} $\bx'(N)$ for a circular electrical network $N$ is similar but constructed with a half-step \emph{clockwise} rotation. 
\end{defn}

Here, each edge of $K(N)$ is reinterpreted as a split whose weight is equal to the conductance of the edge of $K(N).$ Since any response matrix $M$ is seen as a Kron reduction, it is clear that $\bx$ is injective and surjective, and a homeomorphism to the space of circular split systems with clockwise ordering. Indeed, seen as a function on the response matrices, it is just the identity map (on the upper triangular submatrix, which determines the rest of the response matrix), together with alternate interpretation of the effective conductance between node $i$ and $j$, the entry $M_{i,j}$.\footnote{For instance, if $n=8$, $M_{3,8}$ is the weight of the split $\{4,5,6,7,8\}|\{1,2,3\}$.} Note that this immediately gives rise to a Kalmanson metric $\mathbf{d}_{\bx}$ on $[n]$, where the distance $\mathbf{d}_{\bx}(i,j)$ between $i$ and $j$ is the sum of the splits that separate $i$ from $j.$ 

\begin{rem}
Interestingly, while we will see that this metric is the effective resistance $W(N^*)$ for the dual electric network $N^*$ for circular planar networks $N$, no meaningful interpretation of the metric $\mathbf{d}_{\bx}$ is known for general, non-planar inputs.
\end{rem}

Figure~\ref{network_examplo} shows the process for $\bx$, starting with $N$ and ending with a new circular split system $\bx(N)$. We show it in two forms, the easily seen polygonal picture in part (c) and a split network representation in part (d). Another case showing the combinatorial interpretation of $\bx'$ is seen via the running example in Figure~\ref{f:intro}, where Figure~\ref{f:strands} displays the dual network. 

\begin{corollary}
   The combinatorial map $\bx$ respects the cell structure of $\Om_n.$ That is, if $x \le y$ for cells of $\Om_n$, then $\bx(x)\le\bx(y).$
\end{corollary}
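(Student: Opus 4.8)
The plan is to reduce the closure relation to elementary moves and then track what each move does to the set of splits. By the description of the face poset, $x \le y$ in $\Om_n$ holds precisely when a critical network representing $x$ is obtained from one representing $y$ by a finite sequence of edge deletions and contractions, and $\bx(x) \le \bx(y)$ in $\Sp_n$ holds precisely when the split set of $\bx(x)$ is contained in that of $\bx(y)$. Since $\le$ is transitive on both sides, it suffices to fix a network $N$, let $N'$ be the result of deleting or contracting a \emph{single} edge, and show that the splits of $\bx(N')$ form a subcollection of the splits of $\bx(N)$.

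The key bookkeeping device is the combinatorial reading of $\bx$: its splits are in bijection with the edges of the Kron reduction $K(N)$, and by definition a pair of terminals $\{i,j\}$ is an edge of $K(N)$ exactly when $N$ has a path from $i$ to $j$ meeting no other terminal. Hence the claim becomes a purely connectivity statement: deleting or contracting an edge of $N$ can only destroy such interior-only terminal-to-terminal paths, never create a new one. Deletion is immediate, since removing an edge can only remove paths; the family of terminal pairs joined by an interior-only path can only shrink, and with it the edge set of $K(N)$ and the split set of $\bx(N)$.

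Contraction is the step I expect to be the real obstacle, and I would handle it by a lifting argument together with a short case analysis on the endpoints of the contracted edge $e=\{u,v\}$. Any interior-only $i$--$j$ path in $N'$ lifts to a walk in $N$ that uses $e$ at most when passing through the merged vertex. If $e$ joins two interior nodes, the merged vertex is interior, so the lifted walk stays among interior vertices and already contains an interior-only $i$--$j$ path in $N$; the edge $\{i,j\}$ was thus already present in $K(N)$. The delicate sub-case is when $e$ joins a terminal $u$ to an interior node $v$, so that contraction promotes $v$ to the terminal $u$: then an interior-only path in $N'$ cannot pass through the merged vertex internally (it is a terminal), so it lifts to a path of $N$ avoiding both $u$ and $v$, again interior-only in $N$. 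In every case no new edge of $K(N)$ is created, giving the desired containment. (Contraction of an edge between two terminals is not a move within $\Om_n$, as it changes the number of boundary nodes, so it need not be considered.)

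Alternatively, and more quickly, one can argue topologically: on each open cell of $\Om_n$ the off-diagonal support of the response matrix is constant and is exactly the split set of the image cell, while the cells of $\Sp_n$ are ordered by support containment. Since $x \subseteq \overline{y}$ and the matrix entries are continuous, any entry nonzero on $x$ cannot have been identically zero on $y$, so $\mathrm{supp}(x)\subseteq\mathrm{supp}(y)$ and hence $\bx(x)\le\bx(y)$. I would present the combinatorial route as primary, since it engages the deletion/contraction definition of the poset directly, whereas the topological route rests on constancy of the support on open cells --- a fact that itself uses the subtraction-free (hence non-vanishing) form of the Kron-reduced conductances for circular planar networks.
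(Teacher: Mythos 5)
Your overall route is the same as the paper's: the paper's entire proof is the one-sentence assertion that deleting or contracting an edge of a minimal network can only decrease, or leave constant, the edges of the Kron reduction, i.e.\ the splits of $\bx$; your primary combinatorial argument is exactly this assertion, carried out via the connectivity description of the edges of $K(N)$. Your version is in fact slightly more careful than the paper's, since what is really needed is containment of edge sets (hence of split sets), not merely that the \emph{number} of edges cannot grow, and containment is what you target. Your alternative topological argument is a genuinely different route, but, as you yourself note, it leans on constancy of the off-diagonal support on open cells, which is again the same combinatorial fact in disguise.

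There is, however, a hole in your ``delicate sub-case,'' the contraction of an edge $e=\{u,v\}$ with $u$ a terminal and $v$ interior. You claim that every interior-only path of $N'$ lifts to a path of $N$ avoiding both $u$ and $v$, because the merged vertex is a terminal and hence cannot occur internally. That is true only for paths neither of whose \emph{endpoints} is the merged vertex. But the edges of $K(N')$ one actually has to worry about in this sub-case are precisely those incident to $u$: contraction promotes $v$ to the terminal $u$, so an interior-only path of $N$ from $v$ to another terminal $j$ becomes an interior-only path of $N'$ from $u$ to $j$, and such a path emanates from the merged vertex, so your claim does not apply to it. The gap closes with the same lifting idea you use in the interior--interior case: lift the path to $N$; if its first edge leaves $u$, you are done; if it leaves $v$, prepend the edge $e$, which yields a path $u \to v \to \cdots \to j$ all of whose internal vertices (including $v$) are interior in $N$. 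Hence $\{u,j\}$ was already an edge of $K(N)$ and no new split is created. With this case supplied, your combinatorial proof is complete and coincides with the paper's argument.
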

\begin{proof}
    Deleting or contracting an edge of a minimal network $x$ in $\Om_n$ can only decrease or leave constant the number of edges in the Kron reduction, that is, the splits of $\bx(x).$ 
\end{proof}
Thus the action of $\bx$ on the space of all electrical networks (all response matrices, using clockwise order) is an injective map which is surjective onto the space $\Sp_n$ of circular planar split systems in clockwise order. However the action of $\bx$ on the cell structure of $\Om_n$ is an embedding into the cell structure of $\Sp_n.$   
Figure~\ref{fig:fours} shows the combinatorial action of $\bx$ on  the cells of the space $\Om_4.$  The map $\bx$ takes all four networks (highlighted region at the top) to the single (highlighted region at the bottom) top-dimensional cell of $\Sp_4$. Even when restricted to that cell, the action of $\bx$ is not onto: only certain weighted split systems with all 6 nonzero splits arise from circular planar electrical networks. Let the values of splits be $a,b,c,d$ for the trivial splits separating 1,2,3,4 in that order, and the values be $e,f$ for the interior splits. Then the range of $\bx$ inside the set of split systems (with all  6 splits nonzero) are those satisfying $ac\ge ef$ and $bd \ge ef.$  (This follows from the non-negativity of circular minors in the response matrix.)  All the other split systems inside the dashed lines are cells for which $\bx$ is onto. The split systems not inside the dashed lines are not in the image of $\bx$ at all.

\subsection{Kalmanson Map $\rw$} 

Results in \cite{forc-pre} and \cite{frontiers} introduced the correspondence between circular planar electrical networks and circular split systems. The latter paper proved the case for level-1 networks and the former for all connected circular planar electrical networks. Here we review those results, and in Section~\ref{s:cactus} we  extend it that to the cases of cactus networks and multiple connected components. The following is proven in \cite{forc-pre}:

\begin{thm} \label{bigth}
If a symmetric matrix $M$ is a response matrix $M=M(N)$ for a connected circular planar electrical network $N$, then its resistance matrix $W(N)$ obeys the Kalmanson condition. 
\end{thm}

Theorem~\ref{bigth} implies that for any connected circular planar electrical network, there is a corresponding circular split system. The construction of that system is via the Buneman algorithm  \cite{steelphyl} or the Neighbor-Net algorithm  \cite{bm}. The former requires precise resistances, but the latter can work with approximates. For accurate data, they both give the same result, which we define as follows:

\begin{defn}
The \emph{Kalmanson split system} $\rw(N)$ is the circular split system corresponding (injectively) to the Kalmanson resistance metric $W(N)$ of the (equivalence class of the) circular planar electrical network $N$.\footnote{The Kalmanson map we call $\rw$ here is the map $R_w$ defined in \cite{forc-pre}, but we will extend it from connected planar networks to all cactus networks.} 
\end{defn}

The Kalmanson map $\rw$ can be extended to non-planar electrical networks by letting even non-Kalmanson resistance metrics be subjected to the best-fit approximation of Neighbor-net. In that extension, the map will no longer be injective. Note also that the algorithm will find a cyclic order for which $W(N)$ is Kalmanson, if possible.  This output split system will only have the original cyclic order if the original is circular planar in that order. 

\subsection{Induced Map $\sig$} 

For the case of a connected circular planar electrical network as input, we define the induced split system.
For a circular planar electrical network $N$, a \emph{grove} is a spanning forest of the graph of $N$ whose component trees each include some of the nodes labeled by $[n].$ A \emph{$k$-grove} is a grove with $k$ trees. The weight of a grove is the product of the weights of all the edges in that grove.  A 2-grove of $N$ \emph{respects} a split that is minimally displayed by $N$ if the two trees of the grove span the two components of the displayed split. 

\begin{defn}
The weight of a split displayed by a connected circular planar network $N$ is the sum of the weights of 2-groves that respect it, divided by the summed weights of the spanning trees of $N$. The \emph{induced split system} $\sig(N)$ is the set of weighted splits displayed by $N$.\footnote{Our $\sig(N)$ has the same underlying unweighted split network as given by the map $\Sigma(N)$ from Gambette \cite{scalzo, Gambette2017}.}
\end{defn}

\begin{thm}\label{t:rsigma}
For connected circular \emph{planar} electrical networks $N$, the induced split system coincides with the Kalmanson split system. That is, $\rw(N) = \sig(N)$.
\end{thm}

\begin{proof}
 Consider a connected circular planar electrical network $N$ which displays the splits $\sig(N)$. Each split corresponds to a collection of the 2-groves that Kenyon and Wilson use in their formula for the resistance $R_{ij}$: the 2-groves that are possible after deleting any set of edges that display the split \cite[Proposition 2.7]{dimers}. Note that this correspondence partitions the 2-groves. Thus the sum of weights of the splits between two nodes is the same as the sum of the weights of the 2-groves.
\end{proof}

\begin{exmp}
The weight calculation is illustrated in Figure~\ref{sp} for one of the splits of $\sig(N)$ from Figure~\ref{f:intro}(b). There are 15 spanning trees of $N$ and their weights sum to 56. For the split $\{1,6,7,8\}|\{2,3,4,5\}$, the figure shows the four ways to minimally display it, and beneath each we sum the weights of the 2-groves. The weights total to 30, and therefore the weight of the split is 30/56 = 15/28. That value matches the conductance of the edge from 2 to 6 in $K(N^*)$, as shown in Figure~\ref{f:intro}.
\end{exmp}

\begin{figure}[h]
\includegraphics[width=\textwidth]{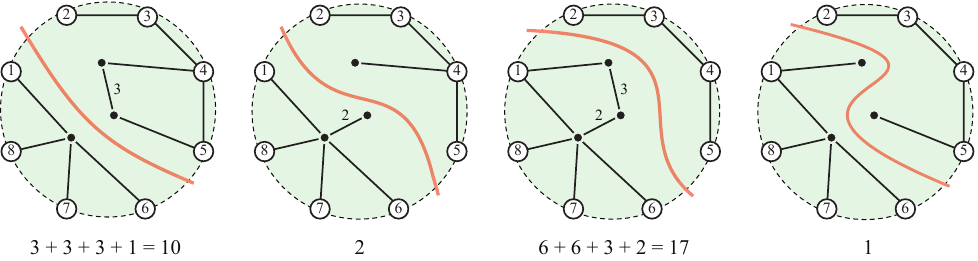}
\caption{The weight calculation for one of the splits of $\sig(N)$ from Figure~\ref{f:intro}(b).}
\label{sp}
\end{figure}

\subsection{Matchings and Duals}
We construct the \textit{dual} $N^*$ of a planar circular electrical network $N$ using \textit{strand matchings}.\footnote{Another alternative replaces each edge with a perpendicular edge; however this method is more difficult to extend to the cactus networks.}
We begin by constructing  the \textit{medial graph} of $N$.  Each original edge of $N$ becomes a new node, and any two new nodes arising from adjacent original edges of $N$ are connected by a new edge of the medial graph. Original nodes at the boundary of $N$ are always encircled by one of these new edges; and if the boundary node is degree one then the new edge will be a loop. However, we truncate the medial graph by deleting the portions that extend outside the boundary of $N$, leaving instead a pair of new nodes called \emph{stubs} on either side of each original boundary node.   

The \emph{strand diagram} of $N$ is found by tracing paths (called \emph{strands}) in the medial graph, one starting from each stub, and turning neither left nor right at new nodes, but taking the straight option to arrive eventually at another stub. 
A \emph{perfect matching} $P$ on $[2n]$ is a set of $n$ pairs $\{a,b\}$ where each element of $[2n]$ is used once. 
Every circular planar electrical network $N$ gives rise to a perfect matching on $[2n]$, by following its strands. However, this matching depends on $N$ and is not an invariant of the electrical equivalence class. To get an invariant matching, we need to restrict to the critical (reduced) representatives of that class as in \cite{lam1}, which can be recognized from the fact that strands never cross each other more than once (lens-free diagrams). In that source Lam shows that the critical representatives are in bijection with all $(2n-1)!!$ perfect matchings. Here we use the strands of even non-critical networks (as in our running example from Figure~\ref{f:intro}) to find the dual, and point out that Kron reductions of $N$ and its dual are invariants of the electrical equivalence classes. (So our maps' dependence only on the Kron reduction proves, for instance, that the calculation of the weight of a split will yield the same result no matter what specific $N$ represents the equivalence class.) 

To form the dual  $N^*$ we shift each original node of $N$ counterclockwise on the boundary just past the stub on that side. It helps visually to shade the strand diagram in a checkerboard fashion, with original nodes in shaded regions. Figure~\ref{f:strands} continues the example $N$ from Figure~\ref{f:intro}(a) along with its strand diagram $P(N$). After shifting the boundary nodes counterclockwise, they will be in unshaded regions.  Now reverse the shading and put new interior nodes in the newly shaded interior regions. Edges connect any two nodes in adjacent regions via the intersection points of the strands to complete the picture of $N^*.$  The perfect matching of $P(N)$ in Figure~\ref{f:strands} has, for instance, pairs $\{1,8\}$ and $\{2,10\}$ while that of $P(N^*)$ has pairs $\{2,9\}$ and $\{3,11\}.$ 

\begin{figure}[h]
\includegraphics[width=\textwidth]{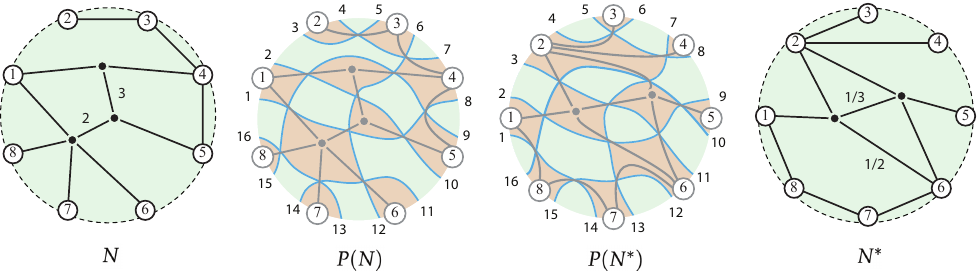}
\caption{A circular planar electrical network $N$ from Figure~\ref{f:intro} and the construction of its planar dual $N^*$ through strand diagrams.}
\label{f:strands}
\end{figure}
 
\begin{defn} \label{dual}
For a circular planar electrical network $N$, the \emph{planar dual} $N^*$ is the network that corresponds to the same strand diagram $P(N)$ but with opposite shading. Each edge of $N^*$ thus bisects an edge of $N$, where we assign the reciprocal of the edge weight to the bisecting edge of $N^*$.
\end{defn}

Although the graphical map at first seems merely a visual coincidence (Edges in the Kron reduction look like splits in a circular split system!), the next theorem shows that it actually extends our earlier maps to all circular networks, planar \emph{and} non-planar. Thus it exhibits all circular electrical networks (up to equivalence) in a one-to-one correspondence with all circular planar split systems, with the planar circular electrical networks embedded as a special subset. 

\begin{thm}\label{match}
 When restricted to connected circular \emph{planar} electrical networks $N$ with a circular clockwise order of $[n]$, the graphical map $\bx$ coincides with both the Kalmanson and the induced split systems of the planar dual. That is, $\bx(N) = \rw(N^*) = \sig(N^*).$ Respectively, we have $\bx'(N^*) = \rw(N) = \sig(N).$ 
\end{thm}

Again we prove this for the case of a connected network $N$.  \footnote{For a network with more than one connected component, the split system we obtain will have multiple components, lying in the compactification of $\Sp_n$;  see Section~\ref{s:cactus}.}

\begin{proof} 
We choose to prove the version with $\bx'(N^*)$ as it is illustrated in Figure~\ref{f:intro}. 
The second equation $\rw(N) = \sig(N)$ follows from Theorem~\ref{t:rsigma}. For the first equation, we relate $\bx'(N^*)$ and $\sig(N).$ Each split in $\sig(N)$ corresponds to the existence of at least one interior path in $N^*$.  Thus each split corresponds to an edge in the Kron reduction of $N^*$, and so a split in $\bx'(N^*)$. The weight of the edge in the Kron reduction is the same as the weight of that split, since the edge in the Kron reduction gives the effective conductance using all the interior paths.
\end{proof}


%
%
\section{Cactus Networks and Compactifications} \label{s:cactus}
\subsection{Cactii}

 A \emph{cactus electrical network} is a generalized circular planar network where boundary nodes are allowed to be identified. In particular, the \emph{space of cactus networks} is the compactification of the space of circular planar networks. In a cactus network the conductance between two boundary nodes is allowed to become $\infty$ (thereby ``\emph{short-circuiting}'' them). This is pictured by pinching together the boundary at those points, creating \textit{bulbs} of the network connected to each other by nodes that each correspond to an equivalence class of the original terminals.  The space of cactus networks was described originally in \cite{lam1}  with more examples in \cite{gao}. Figure~\ref{f:bubble}(a) shows an example along with its set of splits. Notice that in our pictures, we include \textit{empty bulbs} at each multi-identified terminal node. All the original terminal nodes $1,\dots,n$ have labels which are kept in their original clockwise order, with one bulb between each pair of nodes that are identified. This arrangement, including the empty bulbs, is important for seeing the action of our maps to the split systems.
 
 The cactus network $N$ has an associated strand diagram $P(N)$ in part (b), which can have several boundary nodes occupying one shaded region---those which are identified in the network.
Again we use the medial graph of $N$. However, to draw the strands, place all the nodes in clockwise order on a single circular boundary. Include semi-circular strands around any disconnected boundary node, and require that any set of boundary nodes all occupying the same shaded region are a set identified in the network; see part (c). Just as in Definition~\ref{dual}, the strand diagram allows the construction of the planar dual $N^*$ for a cactus network seen in Figure~\ref{f:bubble}(d). 

\begin{figure}[h]
\includegraphics[width=\textwidth]{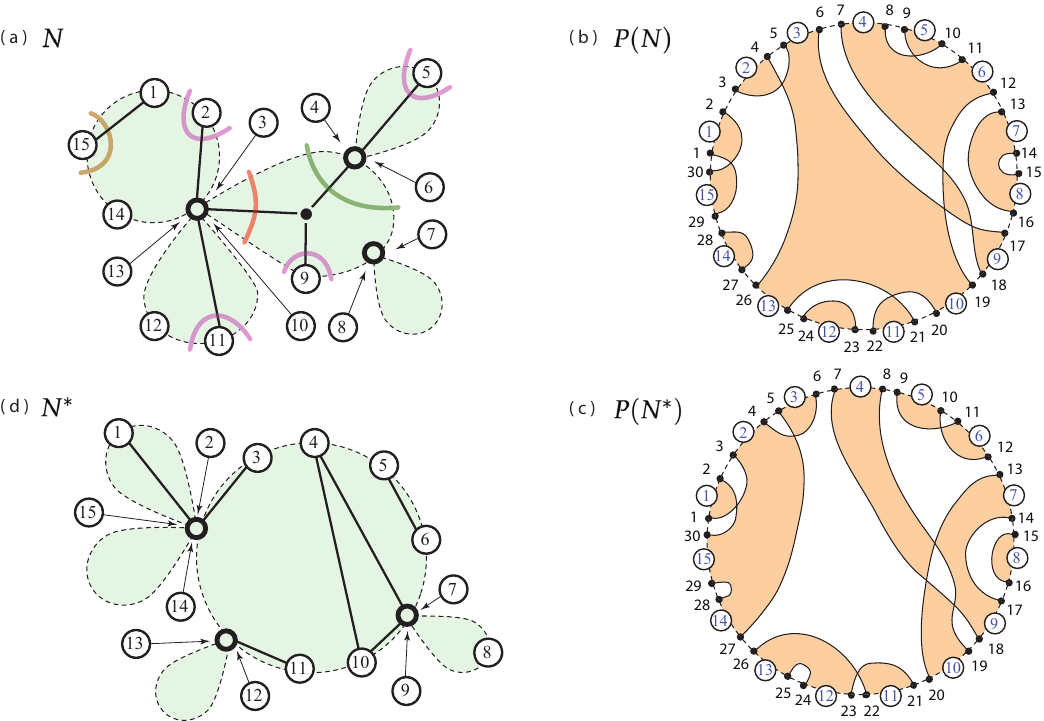}
\caption{Cactus network $N$, strand diagrams, and its dual $N^*$. The splits displayed by $N$ are shown as shaded cuts in part (a). Notice that in the dual $N^*$, the nodes comprising connected components all share the same bulb in $N.$}
\label{f:bubble}
\end{figure}

Each bulb of a cactus network is assigned its own response matrix, with rows and colums indexed by the (identified classes of) terminals on the boundary of that bulb. 
Two cactus networks are equivalent if they have the same sets of response matrices, one for each bulb. 
Lam  \cite{lam1} shows that the (unweighted) electrical equivalence classes of circular planar electrical networks, including those formed by identifying terminal points, correspond bijectively to the perfect matchings on $[2n]$.  To find the perfect matching guaranteed by this bijection, the equivalence class  must be represented by a reduced $N$.  In that case, the strands of the strand diagram will obey the requirement that any two of them cross each other at most once.

\begin{defn}
For a given cyclic order on $[n]$, the space of equivalence classes of cactus networks is the compactification ${\Omm}_n$ of the space of circular electrical networks $\Om_n$.\footnote{As mentioned earlier, in \cite{lam1}, the space ${\Omm}_n$  is denoted  as $E_n$.}
\end{defn}

The equivalence classes of  unweighted cactus networks correspond to cells of a CW-complex structure on the space. (We abuse notation by denoting this face poset as ${\Omm}_n$ as well.) Cell containment is seen by deletion or contraction of any edges.  

\begin{exmp}
Figure~\ref{uno}(a) illustrates the complex of unweighted equivalence classes of cactus networks for $n=3,$ with $f$-vector (5, 6, 3, 1). Figure~\ref{fig:skelly4Omm} shows the 1-skeleton of the complex ${\Omm}_4$ (with cells labeled by split systems). The $f$-vector for the complex  ${\Omm}_4$ is (14, 28, 28, 20, 10, 4, 4, 1). Since the face poset ${\Omm}_n$ is isomorphic to the poset $P_n$ of perfect matchings,  $|{\Omm}_n| = (2n-1)!!\,.$
\end{exmp}

\begin{figure}[h!]
\includegraphics[width=.865\textwidth]{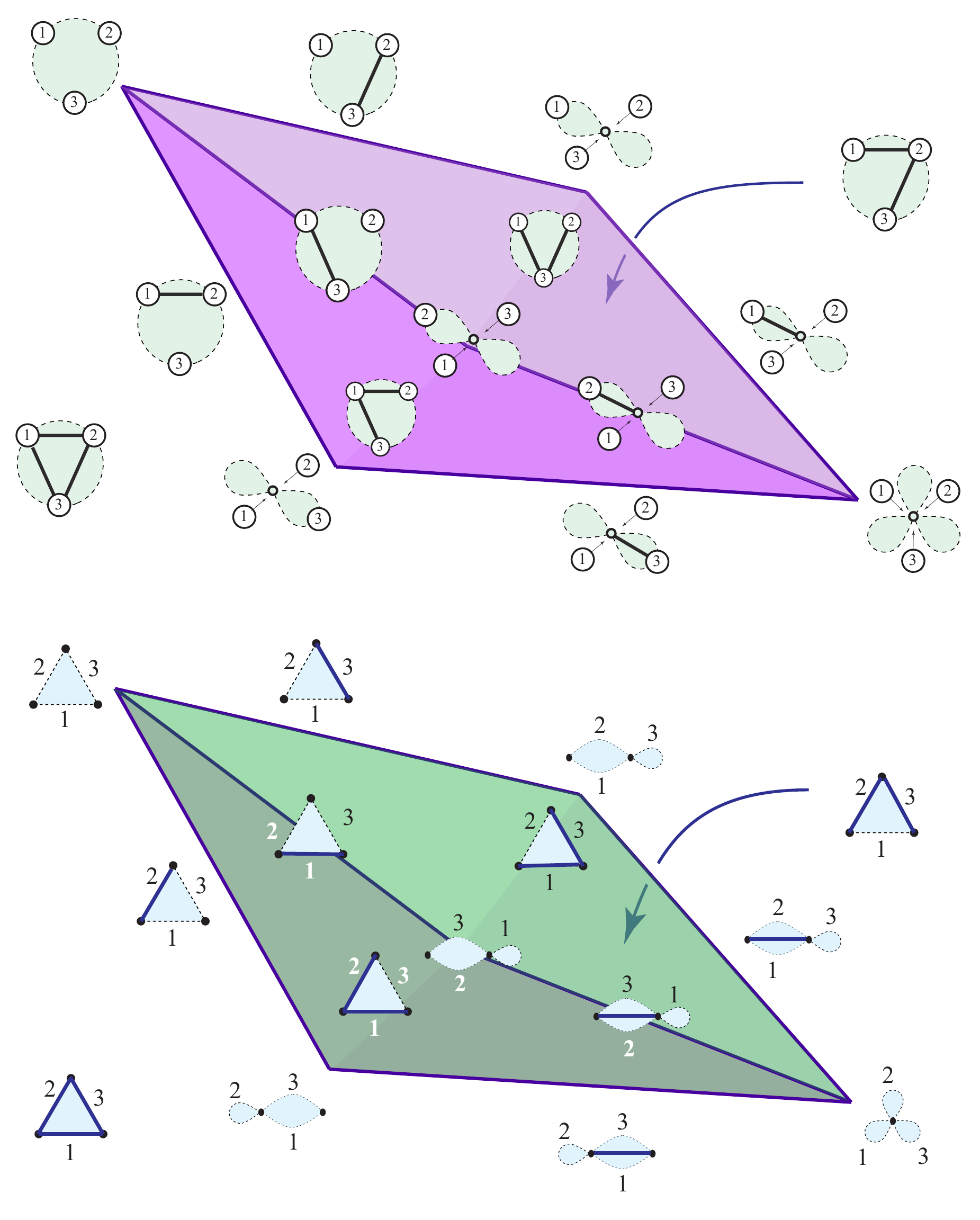} 
\caption{The graphical map $\bx$ from cactus electrical networks ${\Omm}_3$ to cactus split systems $\Spp_3$. Cells with $k$ edges (or splits) are $k$-dimensional; there are three 2-cells in each picture, one is on the back of the 3-cell.}
\label{uno}
\end{figure}

\subsection{Compactifying Split Systems}\label{commo}

Trees are a special case of both circular electrical networks and split systems. Kim \cite{kim} introduced a compactification by allowing edges of trees to become infinite in weight, resulting in a space of ``phylogenetic oranges'' \cite{ms1}. 

\begin{figure}[h]
\includegraphics{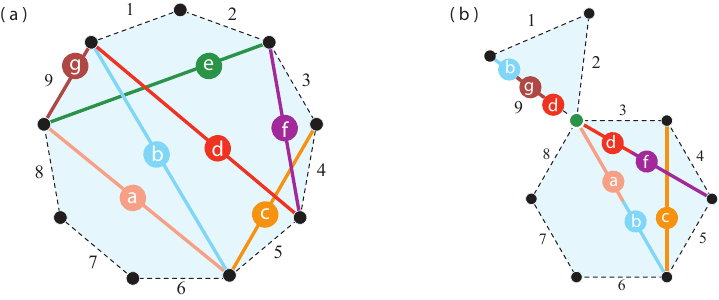}
\caption{Compactifying by weighting edge $e$ to become $\infty$.}
\label{f:poly-compact}
\end{figure}

An analogous procedure can be followed for weighted split systems. Consider the dual polygon representation of a system in Figure~\ref{f:poly-compact}(a). When we compactify this network by allowing an edge (say the diagonal `$e$' labeled in green) to become infinite in weight, this corresponds to contracting the edge to a vertex, where the ends of any edges intersecting $e$ are also contracted along with $e$; see part (b).
Note that this is dual to identification in cactus networks since resistance is the reciprocal of conductance. If this contraction results in overlapping of edges, the weights of these edges are summed. This is observed for edges labeled $g + b + d$ in Figure~\ref{f:poly-compact}(b) as well as $a+b$ and $d+f$.

Thus, a  \emph{cactus split system} for a given cyclic order (clockwise) of $[n]$ is a \emph{non-crossing partition} on $[n]$ and a weighted split system on each part of that partition. Figure~\ref{f:cactus-poly} shows the associated cactus split system to the network $N^*$ from Figure~\ref{f:bubble}, with splits appropriately color-coded.

\begin{defn}
The space $\Spp_n$ is the set of weighted circular cactus split systems with clockwise cyclic order of $[n]$. Thus it is the set of Kalmanson metrics making up the blocks of $n \times n$ dissimilarity matrices $W$, where blocks correspond to the parts of the non-crossing partition of $[n]$. 
\end{defn}

The unweighted cactus split systems on $[n]$ correspond to the cells of a CW-complex structure on $\Spp_n$. Containment of cells corresponds to either deletion (weight = 0) or contraction (weight = $\infty$) of splits. The dimension of the top-dimensional cells of $\Spp_n$ is ${n \choose 2}$, since that is the maximum number of splits compatible with a single cyclic order. 

\begin{exmp}
Figure~\ref{uno}(b) shows the CW-complex structure of the space $\Spp_3$, whereas   Figure~\ref{fig:skelly4} shows the 1-skeleton of $\Spp_4$. The $f$-vector for $\Spp_4$ is (14, 28, 29, 24, 15, 6, 1) with 117 total cells.  The 0-cells are counted by the Catalan numbers, since they correspond to the non-crossing partitions. Formulas for the total numbers of cells are given by Theorem~\ref{countery} and its corollaries.
\end{exmp}

\begin{figure}[h]
\includegraphics[height=.9\textheight]{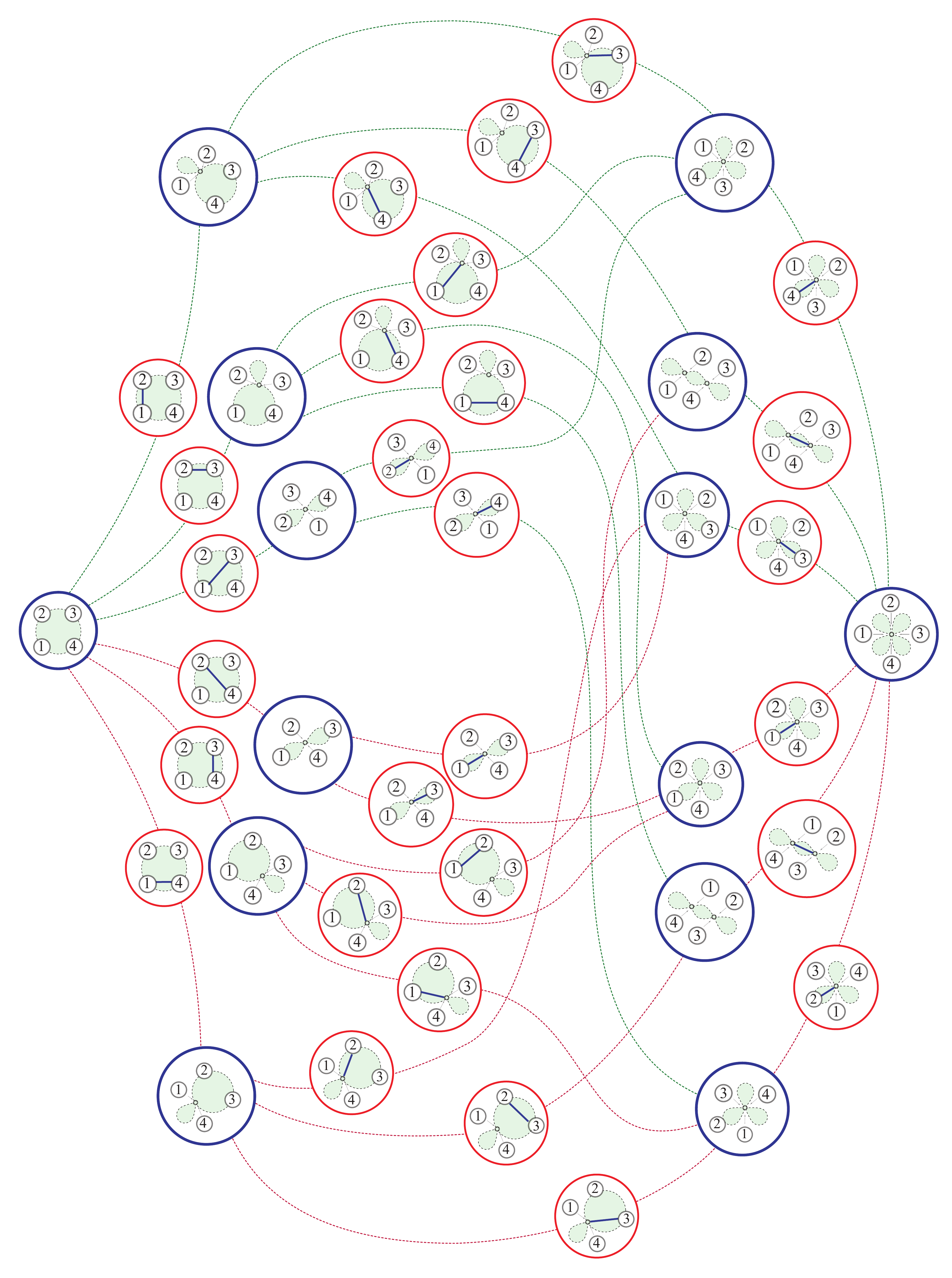}
\caption{The 1-skeleton of $\Omm_4$. The 0-cells have dark blue borders.}
\label{fig:skelly4Omm}
\end{figure}

\begin{figure}[h]
\includegraphics[height=.9\textheight]{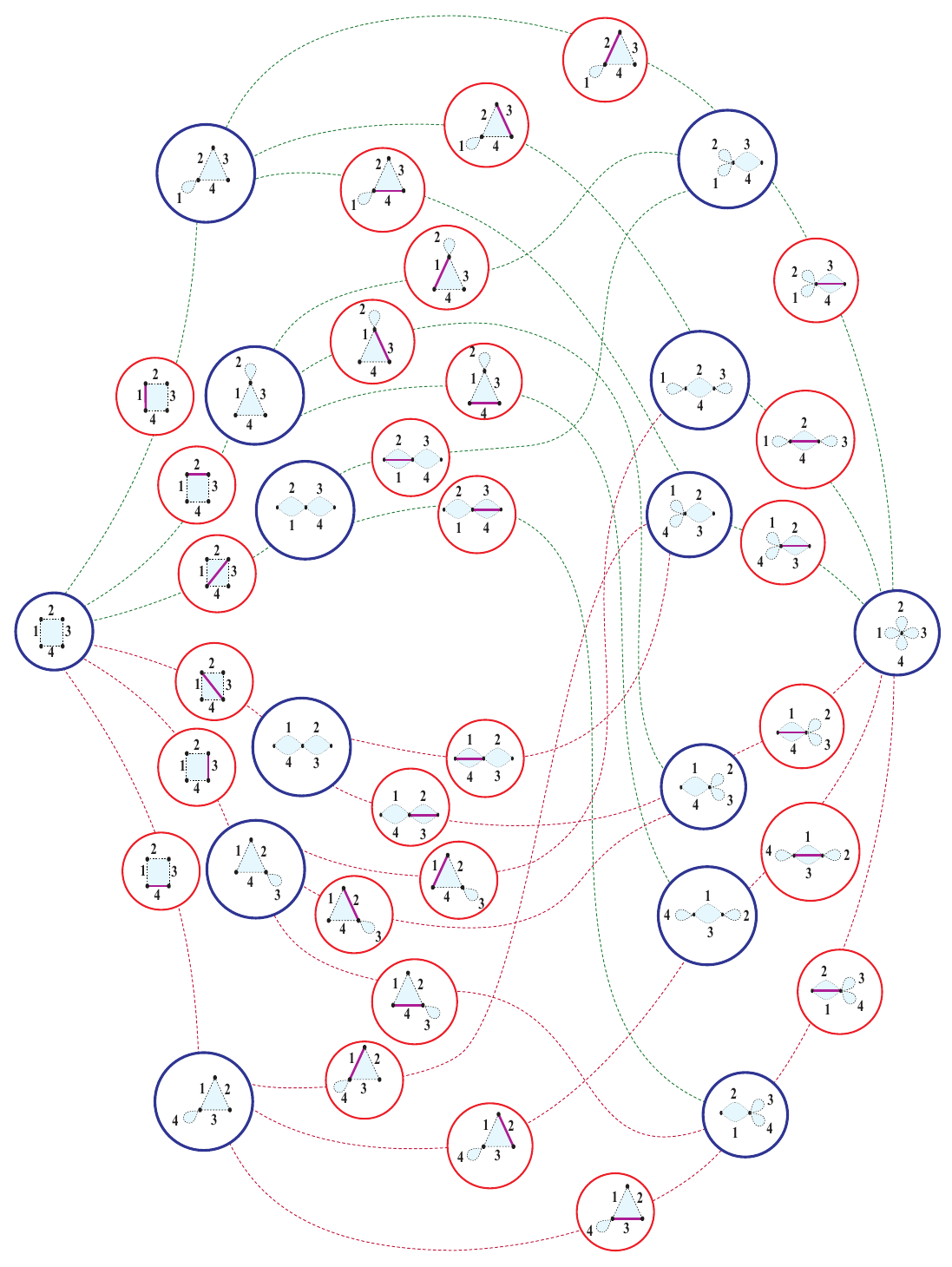}
\caption{The 1-skeleton of $\Spp_4$. Cells are images of $\bx'$ from Figure~\ref{fig:skelly4Omm}.}
\label{fig:skelly4}
\end{figure}

\subsection{Maps} 

Since resistance of $\infty$ corresponds to conductance of 0, the compactification of split systems will correspond naturally to cactus electrical networks. We extend the maps defined in Section~\ref{s:maps} to take cactus networks as input. The process is largely straightforward, but we need to carefully define the Kron reduction and response/resistance matrices of a cactus network in order to see the operation of $\bx$ and $\rw$. 

The Kron reduction $K(N)$ of a cactus network can be performed one bulb at a time. This is the same procedure as considering the response matrices of the decomposition of $N$ into one network for each bulb, embedded in its own disk, as in the proof of Theorem 4.9 in \cite{lam}.

\begin{exmp}
Figure~\ref{fig:compmap} shows a cactus network $N$ and its dual $N^*$ together with their associated split systems. Each bulb of $N$ is its own Kron reduction. Indexed by the nodes in each bulb, but allowing the matrix entries for identified nodes to coincide, we have:

$$ M(N)_{1,2,5,6} = 
 \begin{bmatrix}
 -1 & 1\\
 1 &-1\\
 \end{bmatrix} \text{~~}\text{~~~and~~~}\text{~~~} M(N)_{2,3,4,5,6} = 
 \begin{bmatrix}
 -2 & 1 &1\\
 1 &-2 &1\\
 1 &1 &-2\\
 \end{bmatrix}. $$
\end{exmp}

\begin{figure}
\includegraphics[width = \textwidth]{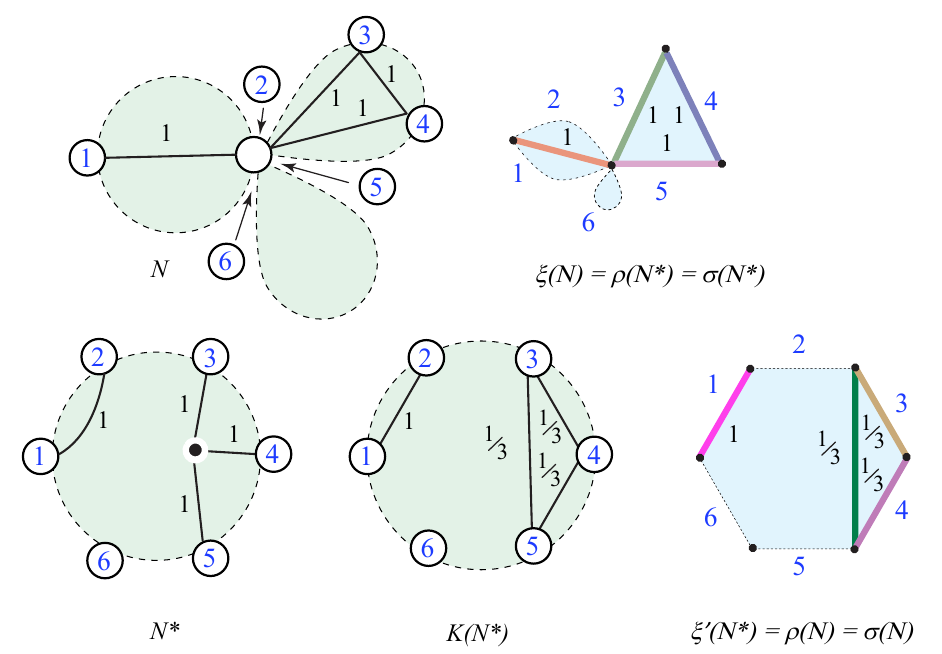}
\caption{Cactus networks, duals, Kron reductions and corresponding split systems.}
\label{fig:compmap}
\end{figure}

\subsubsection{Graphical} The graphical map $\bx$ operates on a cactus network $N$ by first finding the bulb-wise Kron reduction $K(N)$. Then each terminal node label is shifted counterclockwise to become attached instead to the nearest arc of the boundary in that direction. There is a unique nearest arc for each label since we have included empty bulbs. Then the edges of $K(N)$ are reinterpreted as weighted splits of the new cactus split system $\bx(N).$
There is also a clockwise version, the cographical map $\bx'.$

\subsubsection{Induced} The induced map $\sig$ operates on a cactus network by ignoring the bulb structure and instead acting component-wise. Each connected component displays splits of its set of terminal nodes, counting all the identified node labels as  separate elements of that set. Thus each connected component of $N$ will become a bulb of the cactus split system $\sig(N).$ That is, the connected components of $N$ give us a  non-crossing partition of $[n]$ with parts the boundary nodes of each component; this is the partition for $\sig(N)$. The splits of $\sig(N)$ on each part are those displayed in that component, with weights calculated via the spanning trees and 2-groves of that component using the edge weights (conductances). The multiple labels on nodes make no difference to the weight of a grove. 

\subsubsection{Kalmanson} The Kalmanson map $\rw$ also operates component-wise. Each connected component of $N$ has its own Kalmanson resistance matrix, where identified nodes are assigned 0 resistance between them. The associated circular split system will be a bulb of the cactus split system $\rw(N).$
 We get the following:

\begin{thm}
    \label{matchcaxt}
For a cactus network $N$, the map $\bx$ coincides with both the Kalmanson and the induced split systems of the planar dual. That is, $\bx(N) = \rw(N^*) = \sig(N^*) .$ Respectively, we have $\rw(N) = \sig(N) = \bx'(N^*).$ 
\end{thm}

\begin{proof}
 We prove the first triple inequality, and the other follows from the same treatment with appropriate clockwise and counterclockwise rotations.  A cactus network $N$ has bulbs separated by vertices which are the boundary nodes sharing several labels from $[n]$. No dual edge can connect nodes from separate bulbs. Thus the dual $N^*$ has connected components each consisting of nodes from the same bulb of $N$. Vice-versa, connected components of $N$ correspond to bulbs in the dual $N^*$. Thus we ask if  performing the operation of $\sig$ (or $\rw$) on a connected component of the dual $N^*$ has the same combinatorial result as performing the operation of $\bx$ on the corresponding bulb of $N.$ Indeed, the same arguments hold as for the case of a single bulb with a single connected component in Theorem~\ref{match}: edges in each of the bulb-wise Kron reductions $K(N)$  correspond to splits displayed by $N^*$.

We also need to check that the respective operations give matching values for the weights of the splits. This also follows from Theorem~\ref{match}, applied to the single bulb of $N$ and the corresponding connected component of $N^*$.
The weight of the split in the cactus split system $\bx(N)$ is the same as the edge conductance in the Kron reduction of a single bulb, which in turn is the weight of the split found by $\rw(N^*)$ and $\sig(N^*)$ as in the non-cactus case.

\end{proof}

 \begin{figure}[h]
\includegraphics[width=\textwidth]{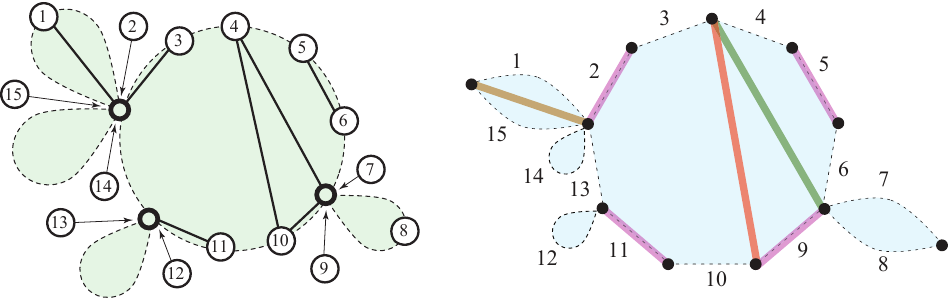}
\caption{The cactus split system $\sig(N) = \bx'(N^*)$ associated to the network from Figure~\ref{f:bubble}.}
\label{f:cactus-poly}
\end{figure}

\subsection{Summary of Maps}

Figure~\ref{coolmap} displays an overview all the maps discussed so far. We note that maps $\bx$ and $\bx'$ take as input any electrical network (regardless of planarity) and return a circular planar split system. However, these two maps rely on a given cyclic order of the boundary nodes, typically given as the clockwise order. In contrast, the map $\sig$ must take as input a circular planar electrical network $N$, and yet gives the same result if applied to an equivalent network with a different cyclic order. 
Finally, the map $\rw$ takes as input any electrical network $N$ --- Kalmanson, planar, or neither, allowing for imperfect data --- and returns a circular planar split system.  However this output is only guaranteed to have resistances corresponding to the original network in the case that $N$ has a Kalmanson resistance metric.

This construction  raises new questions. First, although here we restrict our study to planar networks with a given clockwise order of nodes, the map $\bx$ can operate on any network, as long as the nodes are given a clockwise order first. The results of that mapping for non-planar examples are interesting objects for future study. 

Secondly the map $\bx$ carries the structure of duality on electrical networks to a new duality on their image in the circular split systems. We note that the map $\bx$ is reminiscent of the $T$-duality map on decorated permutations \cite{hypersimp}. Moreover, $\bx$ carries an equivalence relationship on circular split systems --- under twisting as defined by Devadoss and Petti \cite{dev-petti} --- to a new combinatorial equivalence on response matrices which rewires the (possibly non-planar) electrical networks. What is the physical meaning of this equivalence?
Indeed, $\bx$ shows the poset and CW cell structures on circular split systems as a coarsening of those on circular planar networks. It also carries the cell structure of split systems to a new cell structure on all circular electrical networks. This new combinatorial structure is unexplored.

\begin{rem}
    In \cite{lam} is is stated that ``There is a natural notion of the response matrix of a cactus network: we specify
voltages at boundary vertices such that identified vertices  are
assigned the same voltage.'' 
This matrix $M(N)$ can be posited as the limit of the response matrix as some of the edges of a network are taken to zero resistance. 
Moreover, the response matrix of a cactus network will have entries of $\infty$ for the conductance between nodes that share a vertex, and corresponding diagonal entries of $-\infty.$ 
The corresponding resistance matrix will have entries of 0 for pairs of nodes that share a vertex of the cactus network, but the other entries can be calculated directly by isolating the two nodes and using Kirchoff and Ohm's laws to find effective resistance between them.

From Figure~\ref{fig:compmap} we can find a limiting $M(N)$ and $W(N)$ for the entire network. 
$$ M(N) = 
 \begin{bmatrix}
 -1 & 1/3 & 0 & 0 & 1/3 & 1/3\\
 1/3 & -\infty & 1/3 & 1/3 & \infty & \infty \\
 0 & 1/3 & -2 & 1 & 1/3 & 1/3\\
 0 & 1/3 & 1 & -2 & 1/3 & 1/3\\
 1/3 & \infty & 1/3 & 1/3 &-\infty & \infty \\
 1/3 & \infty & 1/3 & 1/3 &\infty & -\infty \\
 \end{bmatrix} 
\hspace{.4in} W(N) =  
 \begin{bmatrix}
0 & 1 & 5/3 & 5/3 & 1 & 1 \\
1 & 0 & 2/3 & 2/3 & 0 & 0 \\
5/3 & 2/3 & 0 & 2/3 & 2/3 & 2/3 \\
5/3  & 2/3 & 2/3 & 0 & 2/3 & 2/3 \\
1 & 0 & 2/3 & 2/3 & 0 & 0 \\
1 & 0 & 2/3 & 2/3 & 0 & 0 \\
 \end{bmatrix} $$

\end{rem}

%
%
\section{Enumeration of Species}\label{s:count}
\subsection{Lagrange Inversion}

Our graphical map $\bx$  takes any circular cactus network  to a cactus planar split system.  We leverage this map and show how to count these systems, all based on the following result:
  
\begin{thm}\label{countery}
For a given cyclic order on $[n]$, the number of unweighted cactus split systems is given by the OEIS sequence A136654. The generating function is given by
$$\Spp(x) \ = \ \left(\frac{1}{x}\right){\textup{Inverse}}\left(\frac{x}{\Sp(x)}\right), \ \ \ \textup{where} \ \  \ \Sp(x) \ = \ \sum_{k=0}^{\infty}2^{k \choose 2}x^k,$$
and ``{\textup{Inverse}}'' refers to taking the inverse function.
\end{thm}

\begin{rem}
To find the number of cactus split systems on $n,$ use the Lagrange inversion theorem on $x/\Sp(x)$: Find the series expansion for $(\Sp_{n}(x))^{n+1}$ (where $\Sp_{n}(x)$ is the partial series up to $x^{n}$), take the coefficient of $x^{n}$ in that expansion, and divide by $n+1$. For instance, when $n=4$, we see  
$$(1+ x + 2x^2+8x^3+64x^4)^5  \ = \ 1 + 5 x + 20 x^2 + 90 x^3 + 585 x^4 +O(x^5).$$
Thus dividing the coefficient of $x^4$ by 4+1 gives us $|\Spp_4| = 585/5 = 117$.  
\end{rem}

\begin{proof}
We start by associating a species $\Spp$ to the combinatorial structures of cactus split systems  \footnote{A great reference for this section is \cite{bergeron_book}}. Define $\Spp([n])$ to be the set of pairs made up of a cactus split system on $[n]$ and a permutation on $[n].$ The permutation can be thought of as a 1-1 mapping of the exterior labels in their cyclic order to a new set of numerical tags $\{1,\dots,n\}$. This allows us to consider $\Spp(x)$ as simultaneously the ordinary generating function of the numbers of structure (types) of cactus split systems and the exponential generating function of the species $\Spp.$  
 
 Next, any instance of this species on $[n]$ can be created with the following steps. First for any $k\le n$ we choose $k$ tags: any subset  $P_1\subset [n]$ of size $k$; and simultaneously we choose a size-$k$ starting subset $S_1$ of our original cyclic labels which is required to contain the least label 1. For instance, to make the structure in Figure~\ref{f:cactus-poly} we let $k=2$ and chose $S_1 = \{1,15\}.$   The tags are attached to the labels in $S_1$ (in any order) and a set of splits is assigned to $S_1.$ There are $2^{k \choose 2}$ possibilities for those splits. (The tags are not shown in Figure~\ref{f:cactus-poly} because we only wanted to display the structure type.)
 
Now, the set $S_1$ is a part of a noncrossing partition of the clockwise circular labels $[n]$, and the remaining labels not in $S_1$ are subdivided by the contiguous portions of $S_1$ into at most $k$ contiguous (in the cyclic order) sets. These contiguous sets are then used to continue the process: the species $\Spp$ is applied to each of them recursively using the least label as starting set, with all the possible partitions of the remaining $n-k$ tags. Each time, we draw the result as attaching the new cactus at the point on the old where the cyclic order of labels was cut.  Thus we see a functional equation for the species: 
$$\Spp \ =\ \sum_{k=0}^{\infty}\ 2^{k\choose 2}X^k\Spp^k\,,$$
or equivalently, $\Spp=\Sp\circ(X\cdot \Spp)$,  where $\Sp$ is the species of ordinary split systems and $X$ is the singleton species. That is, any instance of the species of cactus split systems can be described recursively as an ordinary split system of cactus split systems each with at least one element (which we tag), but possibly with more: the attached cactus at the point after that tagged label. Therefore, in terms of exponential generating functions, $\Spp(x)=\Sp(x\Spp(x))$ .   Multiplying both sides by $x$ and rearranging yields 
$$x \ = \ \frac{x\Spp(x)}{\Sp(x\Spp(x))}\,,$$ which implies 
$$x\Spp(x) \ = \ {\textup{Inverse}}\left(\frac{x}{\Sp(x)}\right)$$
and thus our result.
\end{proof}
 
\subsection{Corollaries}

The following is an immediate consequence:

\begin{corollary}\label{bigform}
Counting the cactus split systems for a given cyclic order on $[n]$ yields
$$|\Spp_n| \ = \ \frac{1}{n+1}\left(\ \sum_{j_0+\dots+j_n = n}\left(\ \prod_{i=0}^n2^{j_i \choose 2}\right)\right)\,,$$
where the sum is over ordered lists of non-negative integers summing to $n$.
\end{corollary}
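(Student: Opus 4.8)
The plan is to read off the closed form directly from the generating-function identity of Theorem~\ref{countery} by applying the Lagrange inversion theorem, making rigorous the numerical recipe described in the remark that follows it. The only ingredient needed is the functional equation established in the proof of Theorem~\ref{countery}.

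First I would record that functional equation, namely $A(x) = B(xA(x))$ with $B(x) = \sum_{k=0}^{\infty} 2^{\binom{k}{2}} x^{k}$, and introduce the auxiliary series $w = w(x) := x\,A(x)$. Substituting, the equation becomes $w = x\,B(w)$, which is exactly of Lagrangean form $w = x\,\phi(w)$ with $\phi = B$. Since $B(0) = 2^{\binom{0}{2}} = 1 \neq 0$ and $w(0) = 0$, the hypotheses of Lagrange inversion are met, so $w$ is the unique power series solving this equation.

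Next I would extract the coefficient of interest. Because $|\Spp_n|$ counts compactified split systems on $[n]$, it equals $[x^{n}]A(x)$; and from $A(x) = w/x$ we get $[x^{n}]A(x) = [x^{n+1}]w$. Lagrange inversion then yields
$$[x^{n+1}]\,w \ = \ \frac{1}{n+1}\,[w^{n}]\,B(w)^{n+1}\,.$$
Finally I would expand $B(w)^{n+1}$ as a product of $n+1$ factors $\sum_{k}2^{\binom{k}{2}}w^{k}$, indexed $i = 0,\dots,n$; the coefficient of $w^{n}$ gathers every choice of exponents $j_0,\dots,j_n \ge 0$ with $\sum_i j_i = n$, each contributing the product $\prod_{i=0}^{n} 2^{\binom{j_i}{2}}$. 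This produces precisely
$$|\Spp_n| \ = \ \frac{1}{n+1}\left(\ \sum_{j_0+\dots+j_n = n}\left(\ \prod_{i=0}^{n} 2^{\binom{j_i}{2}}\right)\right)\,,$$
as claimed.

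The main obstacle is not a genuine difficulty but rather bookkeeping: one must keep the degree shifts straight. The factor $1/x$ relating $A$ to $w$ converts the sought $[x^{n}]A$ into $[x^{n+1}]w$, and it is exactly this shift that turns the Lagrange exponent from $n$ into $n+1$, producing both the $\tfrac{1}{n+1}$ prefactor and the $(n+1)$-fold product indexed by $i=0,\dots,n$. The one hypothesis worth checking explicitly is $\phi(0) = B(0) \neq 0$, which holds since $2^{\binom{0}{2}} = 1$, guaranteeing that the inverse function exists as a power series.
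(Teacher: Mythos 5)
Your proposal is correct and follows essentially the same route the paper intends: the paper presents the corollary as an ``immediate consequence'' of Theorem~\ref{countery}, whose remark prescribes exactly the Lagrange-inversion recipe you carry out (setting $w = xA(x)$ so that $w = xB(w)$, extracting $[x^{n+1}]w = \tfrac{1}{n+1}[w^n]B(w)^{n+1}$, and expanding the $(n+1)$-fold product to get the sum over ordered tuples $j_0+\dots+j_n=n$). Your write-up merely makes the degree-shift bookkeeping and the hypothesis $B(0)=1\neq 0$ explicit, which the paper leaves implicit.
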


Some numbers are shown in Table~\ref{topper}. For comparison, the numbers of unweighted equivalence classes of cactus networks for a given cyclic order are given by $(2n-1)!!$, which counts the perfect matchings on $[2n]$ . For instance, for a given cyclic order for $n=4$, there are  15 cactus split systems not corresponding to a cactus circular planar electrical network --- they correspond instead to non-planar cactus networks.  At the same time, there are 4 types of unweighted planar cactus networks on $[4]$ that all map to the same split system: the top dimensional cell plus 3 more planar cactus networks whose Kron reduction is the complete graph, as seen in Figure~\ref{fig:fours}. Thus the counts of cells compare: 117 - 15 + 3 = 105. 

Moreover, we can also count the cactus electrical networks, using the same arguments as in the proof of Theorem~\ref{countery}, or via a comment of P. Hanna in entry [A111088] of \cite{oeis}: 

\begin{corollary}
Counting the cactus electrical networks yields
$$|\Omm_n| \ = \ \frac{1}{n+1}\left(\ \sum_{j_0+\dots+j_n = n}\left(\ \prod_{i=0}^n |\Om_{j_i}| \right)\right) \ = \ (2n-1)!!\,,$$
where $|\Om_n|$ is the number of distinct classes of  non-compact electrical  networks, by Equation~\eqref{e:main}.
\end{corollary}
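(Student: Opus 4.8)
The plan is to mirror the species argument of Theorem~\ref{countery} but with the combinatorial atom $2^{k \choose 2}$ (the number of split systems on a single block) replaced by $|\Om_{k}|$ (the number of equivalence classes of connected circular planar electrical networks on $k$ boundary nodes). The key observation licensing this substitution is that a compactified electrical network, under a given circular order, decomposes along a noncrossing partition of $[n]$ into bulbs, each of which is an ordinary (non-compact) circular planar electrical network on its own boundary labels. This is exactly the structure already used in the proof of Theorem~\ref{matchcaxt}, where the bulbs of $N$ correspond to the connected components of the dual and carry their own split systems. So I would first set up the species $A'$ whose value $A'([n])$ is the set of pairs consisting of a compactified electrical network on $[n]$ together with a tagging permutation, entirely parallel to the species $A$ in the earlier proof.

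Next I would run the identical recursive decomposition: choose a block size $k$, a set $S_1$ of $k$ cyclic labels containing the label $1$, attach $k$ tags, and place on $S_1$ any one of the $|\Om_k|$ connected networks; the contiguous gaps left by $S_1$ are then filled recursively. This yields the functional equation
$$
A' \ = \ \sum_{k=0}^{\infty} |\Om_{k}|\, X^{k} (A')^{k},
$$
and hence, defining $C(x) = \sum_{k\ge 0} |\Om_k|\, x^{k}$, the relation $A'(x) = C(x A'(x))$, exactly as before with $B$ replaced by $C$. Lagrange inversion applied to $x/C(x)$ then extracts the coefficient, giving the stated sum
$$
|\Omm_n| \ = \ \frac{1}{n+1}\left(\ \sum_{j_0+\dots+j_n = n}\left(\ \prod_{i=0}^n |\Om_{j_i}| \right)\right).
$$
The first equality is therefore just Corollary~\ref{bigform} with the atom changed, so it follows from Theorem~\ref{countery} essentially verbatim.

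The genuinely new content is the \emph{second} equality, $|\Omm_n| = (2n-1)!!$. For this I would invoke the bijection, due to Lam \cite{lam1} and recalled in the excerpt, between unweighted equivalence classes of cactus networks and perfect matchings on $[2n]$; since $\Omm_n$ is by definition the face poset of cactus-network classes and $|\{\text{perfect matchings on }[2n]\}| = (2n-1)!!$, the count is immediate. Thus the combinatorial identity
$$
\frac{1}{n+1}\sum_{j_0+\dots+j_n=n}\ \prod_{i=0}^n |\Om_{j_i}| \ = \ (2n-1)!!
$$
emerges as a corollary of two independent counts of the same objects, rather than something one must prove by manipulating the recurrence \eqref{e:main} directly.

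I expect the main obstacle to be justifying the \emph{bulb decomposition} precisely enough for the species recursion to be valid: one must argue that the noncrossing-partition structure on the boundary, together with a connected planar network on each block, really does enumerate every compactified equivalence class exactly once, and in particular that the gaps carved out by a block $S_1$ are themselves contiguous and independently fillable. This is the same subtlety that underlies Theorem~\ref{matchcaxt} (``no dual edge can connect nodes from separate bulbs''), so I would lean on that established decomposition rather than re-deriving it. A secondary, purely bookkeeping point is the role of the tagging permutation, which converts the ordinary generating function into an exponential one so that Lagrange inversion applies; since this device is already verified in Theorem~\ref{countery}, I would cite that proof and note only that replacing $2^{k\choose 2}$ by $|\Om_k|$ changes none of the generating-function formalism.
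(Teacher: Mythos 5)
Your proposal is correct and is essentially the paper's own argument: the paper proves this corollary precisely by rerunning the species/Lagrange-inversion proof of Theorem~\ref{countery} with the atom $2^{k \choose 2}$ replaced by $|\Om_k|$ (the bulb decomposition along a noncrossing partition of the boundary), and obtains $(2n-1)!!$ from Lam's bijection between cactus-network classes and perfect matchings on $[2n]$, exactly as you do. One small but substantive correction: $\Om_k$ counts \emph{all} circular planar network classes on $k$ boundary nodes, not only connected ones (e.g.\ $|\Om_2|=2$ includes the edgeless network), and the bulbs of a cactus network may indeed carry disconnected networks, so your parenthetical ``connected'' should be deleted --- with connected-only atoms the formula would already fail at $n=2$, giving $2$ instead of $3$; since your generating function $C(x)=\sum_k |\Om_k|\,x^k$ uses $|\Om_k|$ as the paper defines it, the rest of your computation stands.
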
 

\subsection{Plabic Tilings and Ranges}

Counting phylogenetic networks is an active area of research, with the application of bounding the search time for reconstruction algorithms \cite{newgamb, gambette-kelk}. An especially useful sort of phylogenetic split system is the induced split system $\sig(N)$, as described combinatorially by Gambette, Huber, and Scholz \cite{Gambette2017}. These are somewhat difficult to count, especially when we allow for multiple connected components, for two reasons. 
First,  the induced split system for networks with multiple connected components is a cactus split system, as first defined in this paper. Second, the fact that the induced map takes disconnected networks to compact split systems, and compact networks to disconnected split systems, means that it is hard to count the split systems in its range using a recursive species procedure, at first glance. 

However, our new graphical map $\bx$ doesn't have that problem: it preserves the connected components and the compactified (cactus) structure, so we can enumerate the items in the range of $\bx$ using our functional formula $A = B\circ(X\cdot A)$. Theorem~\ref{top} then tell us  that the graphical map and the induced map have the same overall range, so by counting one you count the other. Thus the first step is to report a method for counting the non-compact image of $\bx,$ so that we have input for our final count.

\begin{defn}\label{tau}
For an $n$-node circular planar electrical network $N$, the \emph{plabic tiling} $\tau(N)$ is a collection of polygons with edges labeled with $[n]$, each subdivided and shaded as follows: There is one polygon for each part of the partition making up the cographical split system $\bx'(N^*).$ Add any diagonal that is a bridge in $\bx'(N^*)$. Regions of $\bx'(N^*)$ with no diagonal edges are shaded in $\tau(N)$ whereas regions of $\tau(N)$ that correspond to complete graphs in $\bx'(N^*)$ remain unshaded. 
\end{defn}

\begin{exmp}
Figure~\ref{f:plabic} shows the construction of $\tau(N).$ We use $\bx'(N^*)$ from Figure~\ref{f:intro}, but note that the result is easy to see directly from observing the edges of $K(N^*).$
\end{exmp}

\begin{figure}[h]
\includegraphics[width=\textwidth]{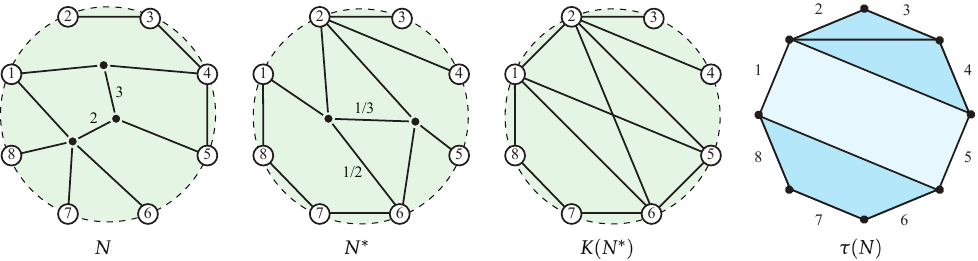}
\caption{Networks from Figure~\ref{f:strands} along with $K(N^*)$ and the plabic tiling $\tau(N$).}
\label{f:plabic}
\end{figure}

Curtis and Morrow \cite{curtisbook} show that a planar circular electrical network gives rise to a response matrix $M$ with non-negative circular minors. In particular, this means that for any pair of crossing diagonals in the Kron reduction, the four other diagonals using the endpoints of the two crossing diagonals must be present.  That feature guarantees that the Kron reduction will appear as non-overlapping cliques and empty polygonal regions, which we state as the following:

\begin{corollary}\label{c:obstruct}
If  a circular electrical network $N$ is planar, then the Kron reductions $K(N^*)$ and $K(N)$, and thus the split systems $\bx'(N^*)$ and $\bx(N)$, allow constructions of the plabic tilings $\tau(N^*)$ and $\tau(N)$.
\end{corollary}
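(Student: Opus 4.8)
The plan is to read Corollary~\ref{c:obstruct} as an assertion that the construction in Definition~\ref{tau} is actually \emph{well defined}: I must show that the diagonals of $\bx'(N^*)$ (and, symmetrically, those coming from $K(N)$) cut each polygon of the underlying noncrossing partition into regions that are each \emph{either} empty \emph{or} a complete graph, with no ``partial'' region carrying a crossing but a missing edge. Once that dichotomy is in place, the shading rule of Definition~\ref{tau} — shade the empty regions, leave the complete-graph regions unshaded — assigns a shading to every region unambiguously, and that is exactly the data of a plabic tiling. So the whole statement reduces to establishing the clique/empty-cell structure of the Kron reduction.

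First I would record the local consequence of planarity. By Curtis and Morrow, a circular planar network has a response matrix $M$ all of whose circular minors are non-negative. Applied to four boundary nodes $a,b,c,d$ occurring in this circular order, the two relevant $2\times 2$ circular minors yield the product inequalities
\[
M_{ac}\,M_{bd} \ \le\ M_{ab}\,M_{cd}, \qquad M_{ac}\,M_{bd} \ \le\ M_{ad}\,M_{bc}.
\]
Since the off-diagonal entries of $M$ are non-negative and an edge $\{i,j\}$ sits in $K(N)$ exactly when $M_{ij}>0$, this says: if the crossing diagonals $\{a,c\}$ and $\{b,d\}$ are both present, then all four sides $\{a,b\},\{b,c\},\{c,d\},\{d,a\}$ are present too. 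This is precisely the statement quoted just before the corollary, and it is the Ptolemy condition on the diagonal set of $\bx'(N^*)$.

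Next I would bootstrap this local rule to the global decomposition. The bridges of $\bx'(N^*)$ — diagonals crossing nothing — are pairwise noncrossing and cut each polygon into subcells. In a subcell that still contains a crossing, I would take a maximal family of pairwise-crossing diagonals and iterate the $K_4$ conclusion above across its crossing pairs to force an edge of $K(N)$ between every pair of vertices the family spans, so that the subcell is a complete graph on its vertices. The content to verify is that this makes each bounded region \emph{exactly} a clique cell or an empty cell, that no crossing survives outside a clique cell, and that distinct clique cells are disjoint; equivalently, that the diagonal set of $\bx'(N^*)$ is a Ptolemy diagram whose cells are the empty cells of a noncrossing dissection together with disjoint complete-graph cells. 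This structural step is where the main difficulty lies: passing from the pairwise ``crossing forces $K_4$'' rule to the global ``every cell is empty or complete, and clique cells are disjoint'' decomposition, i.e.\ the classification of Ptolemy diagrams already implicit in the count $Pt_n$ of Corollary~\ref{cdos}.

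Finally, since planar duality preserves planarity, $N^*$ is planar whenever $N$ is, so $M(N^*)$ likewise has non-negative circular minors and the same argument applies verbatim to $K(N)$, hence to $\bx(N)$, producing the tiling $\tau(N^*)$ alongside $\tau(N)$. I expect the one-line minor calculation of the second paragraph to be routine; the step I would spend the most care on is the global Ptolemy decomposition of the third paragraph, which is the genuine obstacle behind the corollary.
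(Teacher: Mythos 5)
Your proposal follows essentially the same route as the paper: Curtis--Morrow non-negativity of circular minors yields, via the $2\times 2$ minors, exactly the Ptolemy condition that crossing diagonals of the Kron reduction force the four sides of their quadrilateral, and this clique/empty-region structure is what makes the construction of $\tau(N)$ and $\tau(N^*)$ well defined, with planarity of the dual handling the second tiling. The global step you flag as the main obstacle---passing from the local $K_4$ rule to the decomposition into non-overlapping cliques and empty cells---is the one the paper simply asserts, implicitly relying on the classification of Ptolemy diagrams in the cited work \cite{holm}, so your write-up is, if anything, more explicit than the paper's.
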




From Theorem~\ref{top}, and noting that taking the dual is a bijective operation on the (clockwise ordered) cactus electrical networks $\Omm_n$, we conclude that the image of $\Omm_n$ under $\bx$ is the same as its image under $\sig$. We call that image the \emph{faithful} split systems as in \cite{frontiers}. Thus
for compact clockwise networks, the faithful (unweighted) split systems (that is, the unweighted induced systems) are counted by  $|\sig(\Omm_n)| = |\bx({\Omm}_n)|$. These cells  form a subcomplex of $\Spp_n.$  The $f$-vector of that subcomplex for $n=4$ is $( 14, 28, 28, 20, 9, 2, 1)$.

The total number of (noncompact) unweighted split systems in the image $\bx(\Om_n)$ in $\Sp_n$, which form a subcomplex of $\Sp_n,$
can be counted using a formula found in \cite{holm}.
There, the Ptolemy diagrams  are described as polygons with a subset of diagonals, obeying the rule that for every pair of crossing diagonals in a diagram, all the other diagonals using their four endpoints (edges of a quadrilateral) must be included in that diagram.\footnote{The Ptolemy diagrams correspond to torsion pairs in the cluster category of type $A_n$ \cite{holm}.}  This rule precisely describes the Kron reductions of circular planar electrical networks (in clockwise order), with the extra requirement that the boundary edges (between consecutive nodes on the boundary) all be included.  
The Ptolemy diagrams $\mathcal{P}_n$ of the $n$-gon for $n\ge 3$, and using $k = n-3$ are counted in \cite{holm} by
$$|\mathcal{P}_n| \ =  \ \frac{1}{k+2} \sum_{j= 0}^{\lfloor\frac{k+1}{2}\rfloor}\ 2^j{k+1+j \choose j}{2k+2 \choose k+1-2j}\,,$$ 


\noindent resulting in the OEIS sequence [A181517]. Note that the plabic tilings $\mathcal{T}_n$ of a single polygon with  $n$ sides are in direct bijection with the Ptolemy diagrams of size $n$, for $n\ge 3$. We also include in $\mathcal{T}_n$ the unique diagrams of size $n=2$, 1, and 0: a single edge (a trivial split), a single vertex, and the empty diagram. The numbers of these single polygon tilings are then $|\mathcal{T}_n| = 1,1,1,1,4,17, 82, 422,...$.

We wish to count the cells in the image $\bx(\Om_n).$ The only difference between the polygonal pictures of split systems in the image of $\bx$ and the Ptolemy diagrams is that the boundary edges (trivial splits) of the former can be included or excluded, when they are not part of a clique of size 4 or more. In terms of our plabic tilings of a polygon, the trivial splits are all boundary edges of a shaded region (or the single edge diagram with no bounded region). Denoting the number of these optional trivial splits for a given plabic tiling $s \in \mathcal{T}_n$ by $t(s)$ we have:
\begin{corollary}\label{countbx} The number of unweighted faithful circular split systems, which label the cells in the range of $\bx,$ is given by: 
$$|\bx(\Om_n)| \ = \ \sum_{s\in {\mathcal{T}}_n} 2^{t(s)}.$$
\end{corollary}

\begin{exmp}
When $n=4$, the four plabic tilings of a square are $\mathcal{T}_4=$\includegraphics[width=1.25in]{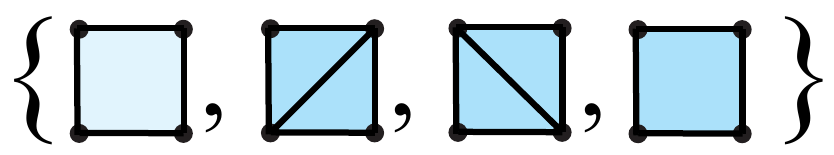}. Thus we have $|\bx(\Om_4)| = 2^0 + 2^4 +2^4 + 2^4 = 49,$ as seen in Figure~\ref{fig:fours}, inside the dashed line. For $n=0,\dots,6$ we have  $|\bx(\Om_n)| = 1, 1, 2, 8, 49, 373, 3196 $ respectively.  
\end{exmp}

  The proof of Theorem~\ref{countery}  can now be easily used to to count the induced images of the cells of cactus electrical networks inside the cactus circular split systems. Since finding the dual network is a bijection on the cells of $\Omm_n,$ then using our Theorem~\ref{top}, we have that the set of cells of cactus induced systems is the same as the set of cells of cactus graphical systems (images of  the graphical map). The important feature is that the cactus graphical systems as a species $A$ are organized precisely as $A = B \circ (X\cdot A)$ where $B$ is the species of ordinary graphical systems. Thus the proof of Theorem~\ref{countery} gives us:    

\begin{corollary}\label{last}
The enumeration of the images of cactus electrical networks yields
$$\ |\sig(\Omm_n)| \ = \ |\bx(\Omm_n)| \ =  \ \frac{1}{n+1}\left(\ \sum_{j_0+\dots+j_n = n}\left(\ \prod_{i=0}^n |\bx(\Om_{j_i})| \right)\right)\,.$$

\noindent which can be expanded to:

$$|\sig(\Omm_n)| \ = \ \frac{1}{n+1}\left(\ \sum_{j_0+\dots+j_n = n}\left(\ \prod_{i=0}^n \ \left(\ \sum_{s\in {\mathcal{T}}_{j_i}} \ 2^{t(s)}\right) \right)\right)\,,$$
where $j_0,\dots,j_n$ are ordered non-negative integers, ${\mathcal{T}}_{j_i}$ is the set of plabic tilings of a polygon with $j_i$ boundary edges, and $t(s)$ is the number of boundary edges of the shaded regions of $s$.
\end{corollary}

\noindent  Some numerical results are seen in Table~\ref{topper}. For example, when $n=3,$ we have the total: $(1/4)(4(8) + 12(2) + 4(1)) = 15,$ as pictured in Figure~\ref{uno}.
When $n=4,$ we have the total: $(1/5)(5(49)+20(8)+10(4) + 30(2) + 5(1)) = 102.$ We would like to find a more efficient formula, but will leave that as an open question.

\section{Global Compactified Spaces} \label{s:glob}
\subsection{}

With future work in mind, we need to extend the definition of the space of circular electrical networks and their compactifications to encompass all circular permutations of $[n]$. We call this extending to the \textit{global} case. Here we introduce the spaces, enumerate some cells for small dimensions, and prove species formulas. Maps of this paper extend to the global spaces : but only $\sig$ and $\rho$ are well defined on the new global equivalence classes.  The map $\bx$ requires all its inputs and output to have a fixed cyclic ordering of nodes, so it is only well defined on equivalence classes within a single chamber of the global space.

\begin{defn} A global compact circular planar electrical network $N$ is a circular (cactus) network with $n$ boundary nodes labeled by $[n]$, not necessarily in clockwise (nor any specific) order. 
\end{defn}

\begin{defn}
    Two global compact circular planar electrical network $N$ and $N'$ are equivalent if they both have the same set of pairwise effective resistances $W_{ij}$ between nodes labeled $i$ and $j.$  
\end{defn}

Note that for instance this means that if $N\equiv N'$ they must both have the same sets of identified nodes (where resistance is 0) and connected components, since resistance between disconnected nodes is $\infty.$ 
Indeed, two planar electrical networks with different cyclic orders $c$ and $c'$ of boundary nodes are still equivalent if they have the same response matrix with rows and columns ordered by the usual counting order of $[n].$ Notice that planarity must hold with respect to the cyclic orders: so the two equivalent networks are both planar with respect to both orders.

 For any fixed cyclic order $c$ of $[n]$ there is a copy of $\Om_n$ and of $\Omm_n$, called $\Om_n^c$ and $\Omm_n^c$, the circular networks with boundary nodes in the order $c$.  Thus the following:
\begin{defn}
   We define the space $\Om^{global}_n$
as the union of the $(n-1)!/2$ copies of $\Om^c_n$, called chambers, one for each cyclic order $c$; glued together along equivalent networks. We define the space $\Omm^{global}_n$
as the union of the $(n-1)!/2$ copies of $\Omm^c_n$, glued together along equivalent networks.
\end{defn}
 
The cells of $\Om^{global}_n$ are labeled with minimal circular planar  networks just as for $\Om_n$. The 0 and 1-dimensional cells are the same for both the global and clockwise spaces. For $n=4$, the extra cells of dimension 2--6 are seen in Figure~\ref{newbies}. These can be added to columns 3--5 in Figure~\ref{fig:fours} to find the $f$-vector  $(1, 6, 15, 20, 16, 9, 3)$ for $\Om^{global}_4$. The $f$-vector for $\Om^{global}_5$ is $(1, 10, 45, 120, 215, 288, 310, 255, 150, 60, 12)$.

\begin{figure}
    \centering\includegraphics[width=0.65\linewidth]{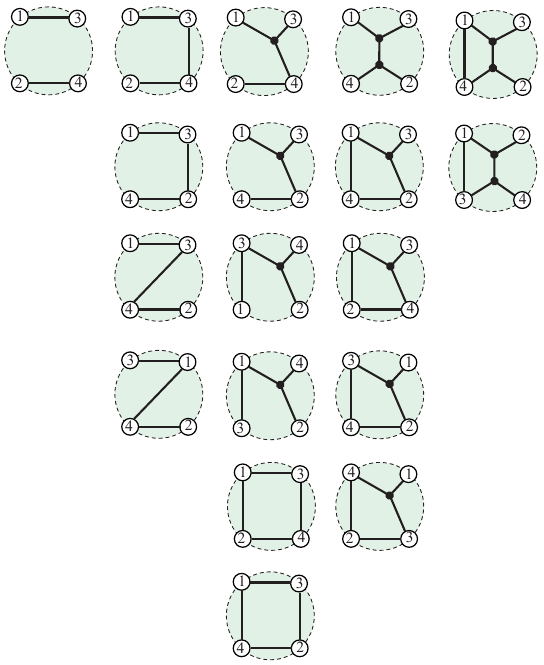}
    \caption{The global circular planar electrical networks of $\Om^{global}_4$ that cannot be realized in planar clockwise cyclic order.}
    \label{newbies}
\end{figure}

Similarly we define the global space of circular split systems and its compactification. As mentioned above, a (global) circular split system using any cyclic ordering is actually the sort of circular split system that is usually studied in the phylogenetics literature. Indeed, the Kalmanson (matrix) metrics are defined as having an existing cyclic order for which they obey the conditions, and thus correspond to a system of splits for which there exists a cyclic order of $[n]$ that allows that system to be drawn as a polygonal diagram. 

\begin{defn}
   Let $\Sp^{global}_n$ denote the space of Kalmanson metrics on $[n]$, which bijectively correspond to circular split systems. This space is comprised of  $(n-1)!/2$ chambers, one for each cyclic order, each a copy of $\Sp_n.$ 
\end{defn}

 In \cite{dev-petti} these systems are studied with the additional requirement that all trivial splits are present, which reflects a phylogenetic situation in which all taxa are temporally removed from common ancestors. That source points out that visually the diagrams of the split system can be twisted along splits that are either not crossed by another split or not present in the system---and still represent the same split system. The (known terms in) the sequence of total numbers of cells of $\Sp^{global}_n$ are 2, 8, 112, 6976, 1332224, $\dots$.  These are based on the hand-counted values in \cite{dev-petti} and \cite{terhorst}, each multiplied by $2^n$ to allow any subset of the trivial splits to be included in the split system. The general term is an open question.
 
 When we compactify the space of split systems with a given cyclic order we represented the new points as non-crossing partitions of $[n]$, with the splits on each part of the partition. Now however we allow any 
 partition of $[n]$, and any cyclic order for the polygon on each part of the partition. 

 \begin{defn}
   Let $\Spp^{global}_n$ denote the space of Kalmanson metrics on parts of partitions of $[n]$, which correspond bijectively to circular split systems on each part of each partition. This space is comprised of  $(n-1)!/2$ chambers, one for each cyclic order, each a copy of $\Spp_n.$ 
\end{defn}
\begin{table}[h]
    \centering
    \begin{tabular}{|c|c|c|c|c|}
    \hline
      Space   &  number of 0-cells & total cells & $f$-vector & $\chi$ \\
      \hline\hline
 \rule{0pt}{2.6ex}\rule[-1.2ex]{0pt}{0pt} 
      $\Om_2 = \Om_2^{global} = \Sp_2 =\Sp^{global}_2$ & 1 & 2 & (1, 1) & 0\\
      \hline
 \rule{0pt}{2.6ex}\rule[-1.2ex]{0pt}{0pt}     
      $\Omm_2 = \Omm_2^{global} =  \Spp_2 = \Spp_2^{global}$ & 2 & 3 & (2, 1) & 1\\
      \hline
 \rule{0pt}{2.6ex}\rule[-1.2ex]{0pt}{0pt} 
      $\Om_3 = \Om_3^{global} = \Sp_3 = \Sp^{global}_3$ & 1 & 8 & (1, 3, 3, 1) &0\\
      \hline
 \rule{0pt}{2.6ex}\rule[-1.2ex]{0pt}{0pt}      
      $\Omm_3 = \Omm_3^{global} =  \Spp_3 = \Spp^{global}_3$ & 5 & 15 & (5, 6, 3, 1) & 1\\
      \hline
 \rule{0pt}{2.6ex}\rule[-1.2ex]{0pt}{0pt}  
      $\Om_4$ & 1 & 52 [A111088] & (1, 6, 14, 16, 10, 4, 1) & 0\\
     \hline
     \rule{0pt}{2.6ex}\rule[-1.2ex]{0pt}{0pt}  
      $\bx(\Om_4)$ & 1 & 49 [Cor.~\ref{countbx}] & (1, 6, 14, 16, 9, 2, 1) & 1\\
     \hline
\rule{0pt}{2.6ex}\rule[-1.2ex]{0pt}{0pt} 
       $\Sp_4$ & 1 & 64 [A006125] & (1, 6, 15, 20, 15, 6, 1) & 0\\
      \hline
 \rule{0pt}{2.6ex}\rule[-1.2ex]{0pt}{0pt}  
      $\Omm_4$ & 14 [A000108] & 105 [A001147] & (14, 28, 28, 20, 10, 4, 1) & 1\\
      \hline
\rule{0pt}{2.6ex}\rule[-1.2ex]{0pt}{0pt} 
       $\bx(\Omm_4) = \sig(\Omm_4)$ & 14 [A000108] & 102 [A136654] & (14, 28, 28, 20, 9, 2, 1) & 2\\
      \hline
 \rule{0pt}{2.6ex}\rule[-1.2ex]{0pt}{0pt} 
       $\Spp_4$ & 14 [A000108] & 117 [A136654] & (14, 28, 29, 24, 15, 6, 1) & 1\\
      \hline
 \rule{0pt}{2.6ex}\rule[-1.2ex]{0pt}{0pt} 
      $\Om^{global}_4$ & 1 & 70 [open] & (1, 6, 15, 20, 16, 9, 3) & 0\\
      \hline
 \rule{0pt}{2.6ex}\rule[-1.2ex]{0pt}{0pt}  
      $\Sp_4^{global}$ & 1 & 112 [open] &  (1, 7, 21, 34, 31, 15, 3) & 0 \\
     \hline
 \rule{0pt}{2.6ex}\rule[-1.2ex]{0pt}{0pt} 
      $\Omm^{global}_4$ & 15 [A000110] & 133 [open] & (15, 31, 33, 26, 16, 9, 3) & 1\\
      \hline
 \rule{0pt}{2.6ex}\rule[-1.2ex]{0pt}{0pt} 
      $\sig(\Omm^{global}_4)$& 15 [A000110] & 124 [open] & (15, 31, 33, 26, 13, 3, 3) & 4\\
      \hline
 \rule{0pt}{2.6ex}\rule[-1.2ex]{0pt}{0pt} 
      $\Spp_4^{global}$ & 15 [A000110] & 169 [open] & (15, 31, 36, 38, 31, 15, 3) & 1\\
      \hline
      \hline
    \end{tabular}
    \vspace{.15in}
    \caption{CW-complexes in this paper: for $n=2,3,4$, some sequences, $f$-vectors, and Euler characteristic $\chi$.}
    \label{ftabglob}
\end{table}
Unweighted cactus networks with any cyclic order correspond to the cells of the resulting CW-complex structure on $\Omm^{global}_n$.  
The 1-skeletons of $\Omm^{global}_n$ and $\Spp^{global}_n$ are made of 0-cells that have no edge (respectively no split) and 1-cells that have a single edge (single split). Thus these are identical as graphs. Figure~\ref{fig:skelly4globOmm} shows the 1-skeleton of $\Omm^{global}_4$. Figure~\ref{fig:skelly4globSpp} shows the 1-skeleton of $\Spp^{global}_4$, and is drawn as the mirror image of Figure~\ref{fig:skelly4globOmm}: the map $\sig$ takes electrical networks in Figure~\ref{fig:skelly4globOmm} to the corresponding split system in the mirror image.  Of course the number of higher dimensional cells in $\Omm^{global}_n$ is generally fewer than in $\Spp^{global}_n$ since planarity of the circuit is required, as opposed to just planarity of the splits. The $f$-vector for $\Omm^{global}_4$ is $(15, 31, 33, 26, 16, 9, 3)$ with 133 total cells. The $f$-vector for $\Omm^{global}_5$ is $(52, 160, 270, 345, 375, 378, 340, 255, 150, 60, 12)$ with 2397 total cells.

The general numbers of cells for these global spaces, counted here by hand, is an open question, and the first step in an exciting new study of the topology of the spaces themselves. The Euler characteristics (alternating sums of the $f$-vectors we  just listed) are both 1, so we conjecture that the global compactified spaces are contractible. Table~\ref{top2} lists the total numbers of cells in global spaces that we have so far. Finding a formula for any of these is an open question. In Table~\ref{ftabglob} we list some face statistics of the examples of spaces in this paper, with OEIS entry names for the sequences that have proven formulas for all $n$. Certain sequences begin with the same terms, since the spaces coincide (and have only one chamber) for $n \le 3$.

 \begin{figure}[h!]
    \centering
    \includegraphics[width=\linewidth]{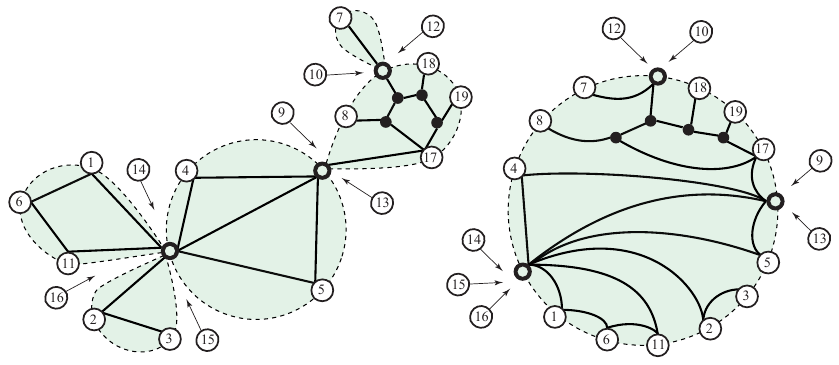}
    \caption{Equivalent global cactus networks.}
    \label{simp}
\end{figure}

\subsection{Global species}

 The sequence of numbers of 0-cells for $\Omm^c_n$ is shown to be the Catalan numbers in \cite{lam2}. The sequence of numbers of 0-cells for $\Spp_n$ is also given by the Catalan numbers, since the 0-cells correspond to non-crossing partitions of $[n].$  The sequence of numbers of 0-cells for $\Spp^{global}_n$ is then clearly the Bell numbers, since those give the numbers of partitions of $n$. That in turn implies that the Bell numbers also count the numbers of 0-cells in  $\Omm_n^{global}.$ In fact, the general cells of both species can be described as composition of species.

 \begin{thm}\label{globspecs}
   The species of global cactus split systems obeys:
     $$\Spp^{global} = E_+\circ \Sp^{global}$$ where $E_+$ is the species of non-empty sets.
   The species of global cactus electrical networks obeys $$\Omm^{global} = \Om^{global} \circ E_+.$$
 \end{thm}

  \noindent That is, a global cactus split system is formed by partitioning the set $[n]$ and then making a global ordinary split system on each  part. As  exponential generating functions we have: 

\begin{large}
  $$  {\Spp^{global}(x) = e^{\Sp^{global}(x)}-1} $$
\end{large}
\noindent We of course only know some of the function in the exponent, but this allows us to check our numbers for small $n.$ We fill in the portion known and find the Taylor series:
\vspace{.1in}

\begin{center}
$\Spp^{global}(x)=$ \begin{Large}
$e^{(x+2x^2/2+8x^3/6+112x^4/24+6976x^5/120+1332224x^6/720 + \dots)}$
\end{Large}$-1$
$\,\,\,\,\,\,\,\,\,\,\,\,\,\,\,\,\,\,\,\,\,\,\,=x + (3 x^2)/2 + (15 x^3)/6 + (169 x^4)/24 + (7857 x^5)/120 + (1381211 x^6)/720 +\dots $
\end{center}
\noindent as seen in Table~\ref{top2}. In contrast, a global cactus electrical network is formed by first partitioning $[n]$ and then making a global ordinary electrical network with the parts of that partition as nodes. 
As  exponential generating functions we have: 

\begin{large}
$$\Omm^{global}(x) = \Om^{global}(e^x-1)$$
\end{large}
\noindent Again we fill in the portion known and find its Taylor series:
$$\Omm^{global}(x)=(e^x-1)+(e^x-1)^2/2+8(e^x-1)^3/6+70(e^x-1)^4/24+1466(e^x-1)^5/120 +\dots $$$$
= x + (3 x^2)/2 + (15 x^3)/6 + (133 x^4)/24 + (2397 x^5)/120 +\dots$$
\noindent also as seen in Table~\ref{top2}.
\begin{proof} [Proof of Theorem~\ref{globspecs}]
    For global cactus split systems, the only further explanation needed is due to the fact that our drawings of these global cactus split systems often show the parts attached at vertices, which reflects the process of compactification. However, the same sets of splits are shown regardless of how the parts are attached; in fact we could simply list the set of split systems on the parts, no attachment needed. 

    The global cactus electrical networks are also drawn in a way that disguises their simplest form. Once we describe that form, it will be clear that  a global cactus electrical network is formed by first partitioning $[n]$ into the sets of identified nodes, and then making a global ordinary electrical network on those identified nodes. The key is that with the freedom to put nodes in any circular order, a global cactus network can always be drawn as a simple circular planar network (with the identified nodes on its boundary). To do that, we choose any bulb to be central. Then we shrink and rotate (in either direction) its immediately adjacent attached bulbs inside its boundary, not crossing any edges. The nodes of the immediately attached bulbs can be made to coincide with the boundary of the central bulb. The process is repeated on secondary bulbs (other bulbs of the central bulb) recursively. See Figure~\ref{simp} for an example. 
\end{proof}

\begin{figure}
 \centering
 \includegraphics[width=\textwidth]{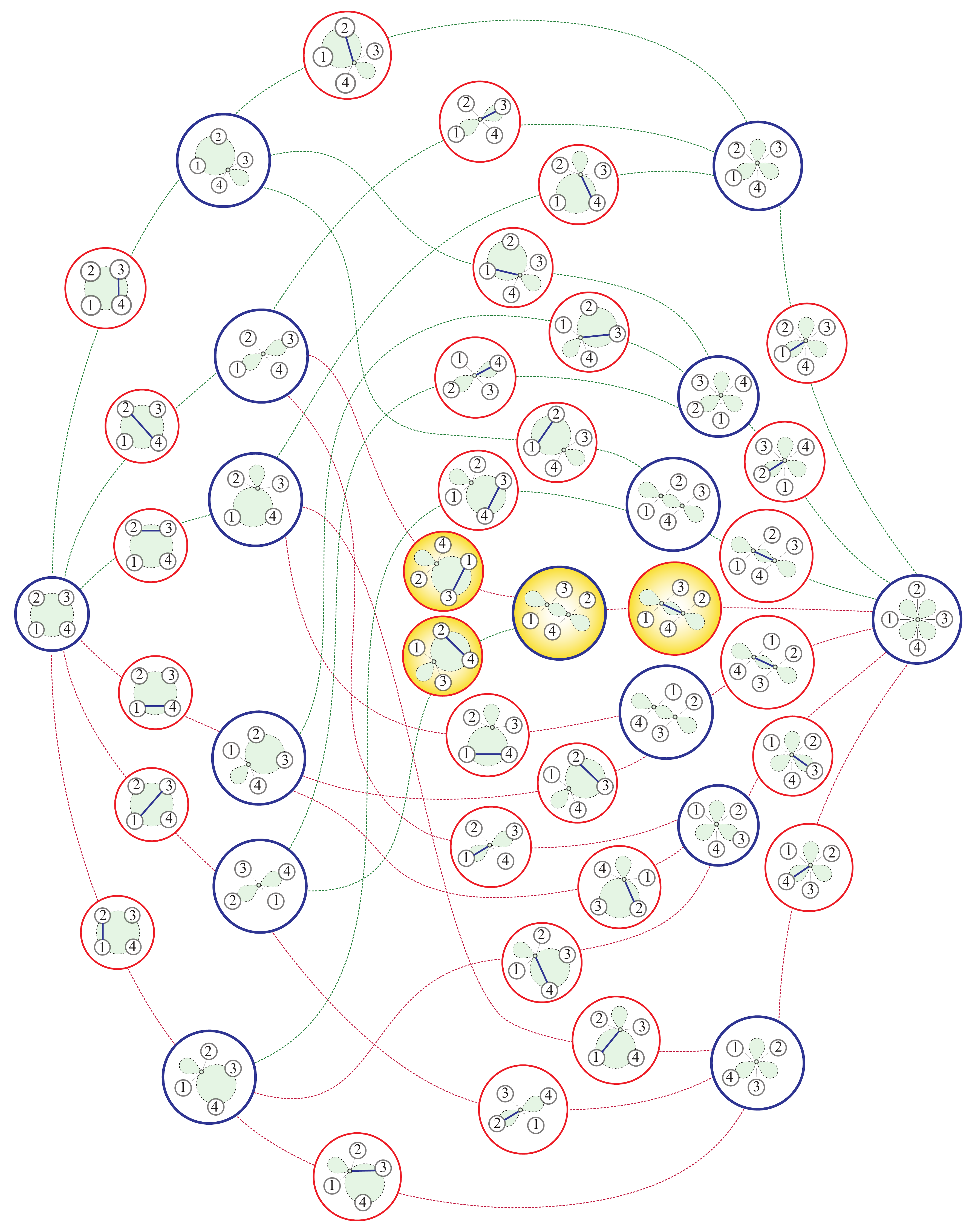}
 \caption{The 1-skeleton of $\Omm^{global}_4$. The highlighted networks in the center are the four which are excluded from Figure~\ref{fig:skelly4Omm}.}
 \label{fig:skelly4globOmm}
\end{figure}

\begin{figure}
 \centering
 \includegraphics[width=\textwidth]{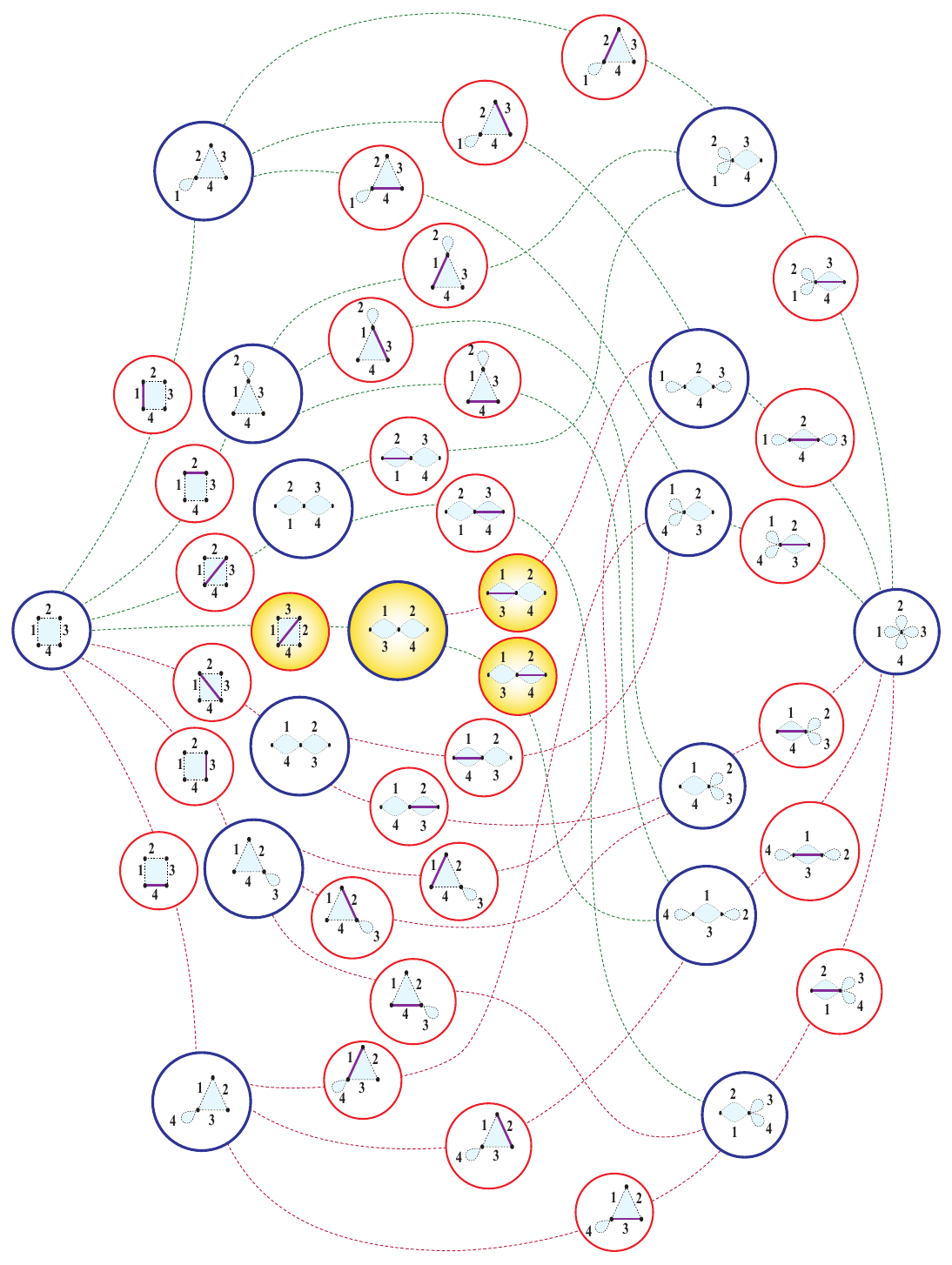}
 \caption{The 1-skeleton of $\Spp^{global}_4$. The highlighted networks in the center are the four which are excluded from Figure~\ref{fig:skelly4}.}
 \label{fig:skelly4globSpp}
\end{figure}

%
%
\section{Consequences} \label{s:corro}
\subsection{Corollaries}

Here we see several implications of the main theorems. Finding the split system $\rho(N) = \sig(N)$ in Theorem~\ref{bigth} is our contribution to Question (1) from the Introduction. In \cite{forc-pre}, we showed that the large-scale, galled-tree, structure of the network $N$ is captured by that of the split system $\rho(N)$.  The algorithm Neighbor-net takes any resistance matrix (perhaps of a circular electrical network), without regard for cyclic order, and returns a split network that exhibits a cyclic order and which reproduces precisely the tree-like portions of the circular planar electrical network. Recall that a \emph{bridge} is a split that is drawn as a diagonal that does not cross any others. The following corollary to Theorem~\ref{bigth} is proven in \cite{forc-pre}:

\begin{corollary}\label{bridg1}
The bridges and cut vertices of $N$ all become bridges in $\rho(N).$ Specifically, the bridges of $N$ are found among the bridges of $\rho(N).$
\end{corollary}

\noindent This corollary allows the problem of reconstructing a network from its response matrix to be subdivided into constructing sub-networks that are separated by bridges. The bridges are made visible in the split system. This preservation of bridges allows the modular reconstruction of $N$ from its response matrix, as demonstrated in \cite{forc-pre}. We next discuss the question of which cyclic orders can be consistent with a given network. 
\begin{defn}
For an $n$-node circular planar electrical network $N$, a cyclic order $c$ of $[n]$ is \emph{consistent} with $N$ when a new network can be created with that cyclic order of terminal nodes, preserving both the planarity and the pairwise resistances of $N.$
The set of consistent cyclic orders is denoted $\mathcal{O}_c(N)$.
\end{defn}

Just looking at a diagram of a circular electrical network with crossing wires, it can be hard to determine whether it is in fact planar. One nice feature of finding its split system $\rho(N) = \bx'(N*)$ is that if $N$ is actually planar, then its set of cyclic orders will be visible in the picture of the split system.  That visibility follows from the fact that the split system is unique, and that two pictures of the split system (with different cyclic orders) can always be transformed into each other via the twists along diagonals described in \cite{dev-petti}.  We make this more precise with our newly defined plabic tiling: $\tau(N)$ is thus useful for enumerating the consistent cyclic orders $\mathcal{O}_c(N)$, that is the number of top-dimensional cells which contain the cell of $N.$ We show that the number of cyclic orders of $[n]$ consistent with $N$ is found by considering the sizes of the regions of $\tau(N),$ answering Question (3) from the Introduction.

\begin{theorem}\label{counter}
 The number of cyclic orders consistent with a given connected, single-bulbed, circular planar electrical network $N$ is found by one half the product of factors, one factor for each region of $\tau(N).$ Each shaded region in $\tau(N)$ with $k$ sides contributes a factor of $(k-1)!$ and each unshaded region contributes a factor of 2.
\end{theorem}

\begin{proof}
    Since $N$ is planar, $\tau(N)$ will reflect the structure of the unique split system $\rho(N),$ which we can draw as a split network. Shaded regions correspond to cut-nodes of the split network: $k$ portions of the network meet at that cut node and thus can be arranged in any cyclic order, contributing $(k-1)!$ options. The unshaded regions can be flipped over, so each contributes 2 more independent options. Finally choosing various of these options can eventually flip over the entire network which is really the same cyclic ordering, so we need to divide by 2 in the end.  
\end{proof}

\begin{exmp}
The plabic tiling $\tau(N)$ in Figure~\ref{f:plabic} has 3 shaded regions contributing factors of $(3-1)!, (3-1)!$ and $(4-1)!$ respectively, and one unshaded region contributing a factor of 2. By Theorem~\ref{counter}, the number of consistent cyclic orders for the network $N$ from Figure~\ref{f:intro} is $|\mathcal{O}_c(N)| =2(2!)(2!)(3!)/2 = 24.$
\end{exmp}

The cases of a disconnected or cactus network with more than one bulb are more complicated, and we leave those for future work. 
For now we mention a  consequence that answers a question which is perhaps already familiar to circuit designers: Rearranging the terminal nodes of  a planar network might cause wires to cross. And if that new cyclic order is not consistent with the split system, it will not allow the new arrangement to be equivalent to a planar one. 

\begin{corollary}\label{badt}
Consider $N'$ formed by rearranging the boundary nodes of $N$ while keeping all edges intact, but in a cyclic order $c'$ not consistent with $\rho(N)$. Then $N'$ with its new cyclic order $c'$ is guaranteed to be nonplanar.
\end{corollary} 

This is exemplified in Figure~\ref{bad}, where $N'$ is obtained from $N$ by transposing terminals $5$ and $6$.
To see that the resulting $N'$ is non-planar, we can check the absence of its cyclic order from the list of 24 consistent circular
orders $\mathcal{O}_c(N).$ Notice that the graphs of $K(N)$ and $K(N')$ are the same shape; but the response matrices using the two orders to order their rows  would have differing values, and their minors would reflect the planarity and non-planarity respectively. Corollary~\ref{badt} simply allows that knowledge without directly calculating the minors.
\begin{figure}[h]
\includegraphics[width=\textwidth]{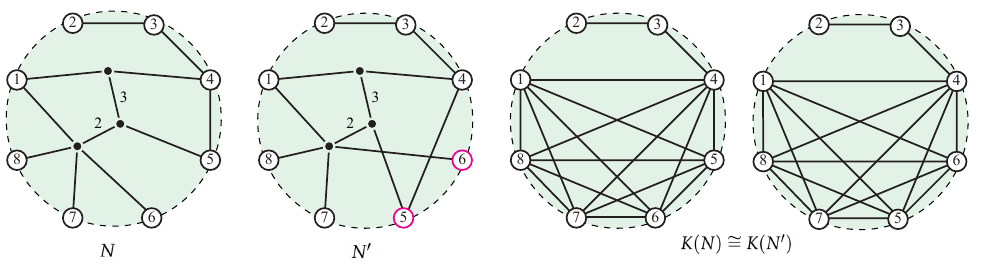}
\caption{The graph of $N'$ cannot be planar, despite sharing $K(N) \cong K(N')$. }
\label{bad}
\end{figure}


\noindent Corollary~\ref{c:obstruct} points out that the construction of $\tau(N)$ is always possible when $N$ is planar.
That fact allows our answer to Question (2) in the Introduction: there are certain obstructions to planarity easily visible in the Kron reduction.

\begin{exmp}
Figure~\ref{obs} shows two networks $N'$ and $N''$ obtained by adding a wire to network $N$ from Figure~\ref{f:intro}. The graph of $K(N')$ cannot be transformed to a plabic tiling, and therefore $N'$ cannot be planar, from Corollary~\ref{c:obstruct}. On the other hand, $N''$ and $K(N'')$ show that the converse is not always true.
\end{exmp}

\begin{figure}[h]
\includegraphics[width=\textwidth]{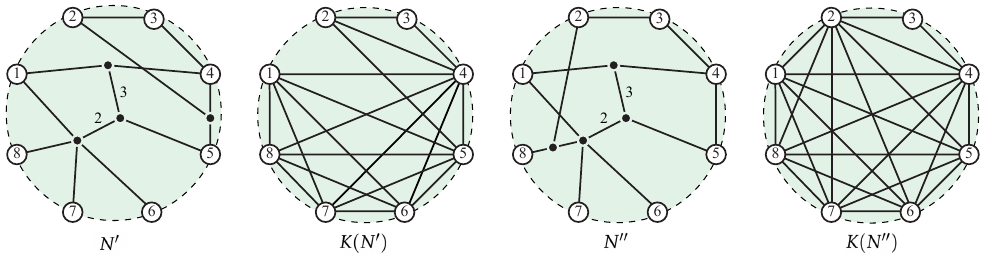}
\caption{Altering a network $N$ by adding a wire to get $N'$ and $N''$. }
\label{obs}
\end{figure}

\subsection{Traveling Salesman Polytopes}

 Since the Kalmanson and cographical split systems coincide, we can transfer known results relating split systems and the Symmetric Traveling Salesman Polytope, to the new arena of circular planar electrical networks. 

 \begin{defn}\label{face}
 For a cyclic order $c$ of $[n]$ let the incidence vector $\mathbf{x}(c)$ have components for each pair $i < j \in [n]$; equaling 1 if $i,j$ are adjacent in $c$, 0 if not. The ordering of the components is lexicographic. These incidence vectors for all cyclic orders make up the $(n-1)!/2$ vertices of the Symmetric Traveling Salesman Polytope STSP($n$).
\end{defn}

 \begin{defn}
 For a given circular planar electrical network $N$ define by $W_N(\mathbf{x}) = W_N\cdot \mathbf{x}$ the linear functional where the dot product is taken with the upper triangle of $W(N)$, not including the diagonal, read by rows.
 \end{defn}

\begin{theorem}\label{stsp}
 For a circular planar electrical network $N$, the linear functional $W_N$ is minimized simultaneously over the  STSP($n$) at the vertices $\mathbf{x}(c)$ corresponding to the orders in $\mathcal{O}_c(N)$ The value of that minimum is twice the sum of the splits of $\rho(N).$ 
\end{theorem}

\begin{proof}
This follows from Theorem 5.6 of \cite{scalzo} and Theorem 4.5 of \cite{frontiers}. The key is that any circuit of the boundary nodes in consistent cyclic order will accumulate two copies of every split. 
\end{proof}

\begin{exmp}
Theorem~\ref{stsp} shows that the set of incidence vectors corresponding to the cyclic orders in $\mathcal{O}_c(N)$ make up a face of STSP($n$). The 24 vectors associated to $N$ in Figure~\ref{f:intro} are the vertices of the face of STSP(8) with $f$-vector 
$(1, 24, 96, 186, 210, 145, 60, 13, 1)$.
For $N$ in Figure~\ref{f:intro}, the minimum is located simultaneously at the 24 vertices of STSP(8) where it takes the value 383/28. Thus, we answer Question (4) in the affirmative: By minimizing a given $W$, we can list the full set of cyclic orders allowing a planar construction for a circuit that gives $W$ upon measurement. Note that this listing is accomplished without knowing the actual structure of $N$, only its boundary measurements.
\end{exmp}



\newpage

\bibliographystyle{plain}
\bibliography{phylokron}{}

\end{document}